\newtheorem{theorem}{Theorem}[section]
\newtheorem{corollary}[theorem]{Corollary}
\newtheorem{proposition}[theorem]{Proposition}
\theoremstyle{definition}
\newtheorem{definition}[theorem]{Definition}
\theoremstyle{remark}
\newtheorem{remark}[theorem]{Remark}
\numberwithin{equation}{section}
\DeclareMathOperator{\volu}{vol}
\DeclareMathOperator{\conv}{conv}
\DeclareMathOperator{\inte}{int}
\renewcommand{\epsilon}{\varepsilon}
\begin{document}

\title[Shortest closed billiard trajectories\dots]{Shortest closed billiard trajectories in the plane and equality cases in 
Mahler's conjecture}

\author[A.~Balitskiy]{Alexey~Balitskiy}

\email{alexey\_m39@mail.ru}

\address{Dept. of Mathematics, Moscow Institute of Physics and Technology, Institutskiy per. 9, Dolgoprudny, Russia 141700}
\address{Institute for Information Transmission Problems RAS, Bolshoy Karetny per. 19, Moscow, Russia 127994}


\begin{abstract}
In this note we prove some Rogers--Shepard type inequalities for the lengths of shortest closed billiard trajectories, mostly in the planar case. We also establish some properties of closed billiard trajectories in Hanner polytopes, having some significance in the symplectic approach to the Mahler conjecture.
\end{abstract}

\maketitle

\section{Introduction}

In recent works~\cite{aao2012,aaok2013} the authors establish an interesting connection between billiards and some problems from symplectic geometry and convex geometry. Namely, with the help of the billiard approach they reduce the famous Mahler conjecture (see~\cite{ma1939,tao2007}) to Viterbo's conjecture from symplectic geometry.


Let an $n$-dimensional real vector space $V$ be endowed with a norm with unit ball $T^\circ$ (where $T^\circ \subset V$ is polar to a convex body $T \subset V^*$). We denote such norm by $\|\cdot\|_T$, mind the nonstandard notation and interchange of the original space and its dual. By definition, $\|q\|_T = \max\limits_{p \in T} \langle p, q \rangle$, where $\langle \cdot, \cdot \rangle : V^* \times V \rightarrow \mathbb{R}$ is the canonical bilinear form of the duality between $V$ and $V^*$. Here we assume that $T$ contains the origin (although this can be relaxed to some extent), but is not necessarily centrally symmetric. Therefore the norm may be non-symmetric, in general, $\|q\|_T \neq \|-q\|_T$. We call such norms \emph{flat Finsler norms} to distinguish them from the symmetric case.

The connection between symplectic capacities and billiards is that
$$
\xi_T(K) = c_{HZ}(K \times T),
$$
where $c_{HZ}(\cdot)$ stands for the so called Hofer--Zehnder symplectic capacity and
$\xi_T(K)$ denotes the length of the shortest closed billiard trajectory in a convex body $K \subset V$ with geometry of lengths given by a convex body $T \subset V^*$ and its norm $\|\cdot\|_T$ in $V$.

More precisely, we measure lengths in $V$ using the norm $\|\cdot\|_{T}$ and the billiard reflection rule is given by locally minimizing the length functional. We say that a polygonal line $q_{start} \rightarrow q_{refl} \rightarrow q_{end}$ (where $q_{refl} \in \partial K, q_{start} \in K, q_{end} \in K$) has a \emph{billiard reflection} at the point $q_{refl}$ if the functional
$$
\varphi(q) = \|q_{end} - q\|_T + \|q - q_{start}\|_T
$$
has a local minimum at the point $q = q_{refl}$ under the constraint $q \in \partial K$. If $q_{refl}$ belongs to the smooth piece of $\partial K$ we say that a \emph{classical} billiard reflection occurs.

In such a case one can rewrite the reflection rule in the differential form:
\begin{equation}
\label{equation:reflection}
p' - p = - \lambda n_K(q), \quad \lambda > 0.
\end{equation}
Here we define the momenta $p, p' \in V^*$ before and after the reflection so that a velocity $v = \dot q$ can be found by the formula ($d$ denotes the differential)
$$
v = d\|p\|_{T^\circ}, \quad v \in \partial T^\circ.
$$
In order to have such a formula, we add a requirement that $p$ belongs to the smooth piece of $\partial T$.
Also here we define the outer normal to the body $K$ at a point $q \in \partial K$ as
$$
n_K(q) = d\|q\|_{K^\circ}, \quad n_K(q) \in \partial K^\circ.
$$
In the rest of this section we assume the bodies $K$ and $T$ smooth so that velocities and normals are well-defined, although later we will be able to work with arbitrary convex bodies with some caution.

\begin{figure}
\centering
\includegraphics[width=1.0\textwidth]{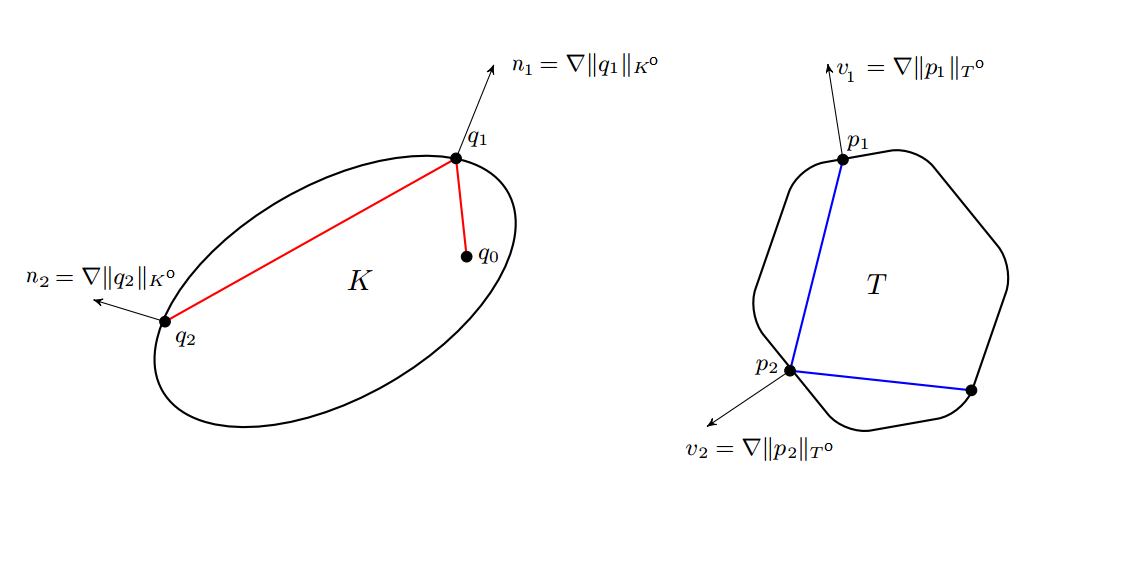}
\caption{Illustration to billiard reflection rule}
\label{picture:reflection}
\end{figure}

Following~\cite{abks2014}, we use Bezdeks' approach characterizing shortest closed billiard trajectories (see also the original paper~\cite{bb2009}). The approach is that instead of finding the shortest closed billiard trajectory in $K$ we can search for the shortest polygonal line that cannot be translated into $\inte K$. More formally, we define
$$
\mathcal P_m(K) = \{(q_1,\ldots, q_m) : \{q_1,\ldots, q_m\} \ \text{doesn't fit into}\ (\inte K + t)\ \text{with} \ t\in V \} =
$$
$$
= \{(q_1,\ldots, q_m) : \{q_1,\ldots, q_m\} \ \text{doesn't fit into}\ (\alpha K+t)\ \text{with}\ \alpha\in (0,1),\ t\in V \}
$$
and
$$
\xi_T(K) = \min_{Q \in \mathcal Q_T(K)} \ell_T(Q),
$$
where $Q = (q_1,\ldots, q_m),\ m \ge 2,$ ranges over the set $\mathcal Q_T(K)$ of all closed billiard trajectories in $K$ with geometry defined by $T$. (Here we denote the length $\ell_T (q_1,\ldots, q_m) = \sum_{i=1}^m \|q_{i+1} - q_i\|_T$.)

The main result of~\cite{bb2009}, revised and slightly generalized in~\cite{abks2014}, states the following.

\begin{theorem}
\label{theorem:bezdeks}
For any smooth convex bodies $K \subset V, T \subset V^*$ containing the origins of $V, V^*$ in their interiors an equality holds:
$$
\xi_T(K) = \min_{m\ge 2} \min_{Q\in \mathcal P_m(K)} \ell_T(Q);
$$
and furthermore, the minimum is attained at $m\le n + 1$.
\end{theorem}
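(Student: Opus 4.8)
The plan is to establish the claimed identity by two opposite inequalities and to read off the bound $m \le n+1$ from Helly's theorem. Throughout I would work with the Minkowski gauge $g_K(x) = \inf\{\lambda > 0 : x \in \lambda K\}$, so that a finite set $\{q_1, \dots, q_m\}$ fails to fit into $\inte K + t$ for every $t$ precisely when the convex function $\phi(t) = \max_i g_K(q_i - t)$ satisfies $\min_t \phi(t) \ge 1$; equivalently, when $\bigcap_i (q_i - \inte K) = \emptyset$. Writing $L = \min_{m \ge 2}\min_{Q \in \mathcal P_m(K)} \ell_T(Q)$, the inequality $L \le \xi_T(K)$ is the easy half: I would show that the vertex set of any closed billiard trajectory lies in some $\mathcal P_m(K)$, so that $\mathcal Q_T(K)$ embeds into $\bigcup_m \mathcal P_m(K)$ and minimizing over the larger family can only decrease the length.

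For that embedding I use the reflection rule. Along the $i$-th edge the (unit) momentum is the point of $\partial T$ whose outer normal points in the direction $q_{i+1} - q_i$, hence it is constant on the edge; thus the outgoing momentum $p_i'$ at $q_i$ equals the incoming momentum $p_{i+1}$ at $q_{i+1}$. Summing the reflection relations $p_i' - p_i = -\lambda_i\, n_K(q_i)$, $\lambda_i > 0$, around the closed trajectory makes the left-hand side telescope to $0$, so $\sum_i \lambda_i\, n_K(q_i) = 0$, i.e. the origin is a positive combination of the outer normals. Since every $q_i \in \partial K$ makes $\phi(0) = 1$, and the subdifferential $\partial\phi(0)$ is the convex hull of positive multiples of $-n_K(q_i)$, the condition $0 \in \partial\phi(0)$ is equivalent to $0 \in \conv\{n_K(q_i)\}$, which is exactly the content of $\sum_i \lambda_i\, n_K(q_i) = 0$. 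Hence $t = 0$ minimizes the convex function $\phi$, its minimum value is $1$, and the vertices do not fit into $\inte K$.

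The reverse inequality $L \ge \xi_T(K)$ is the substantial half: I would argue that a length-minimizer $Q^\ast = (q_1^\ast, \dots, q_m^\ast)$ is itself a billiard trajectory. First, normalization: at a minimizer the non-fitting constraint is tight ($\min_t \phi = 1$), for otherwise a contraction of $Q^\ast$ toward the optimal center would stay non-fitting and be strictly shorter; translating, I may assume $t = 0$ is optimal and $g_K(q_i^\ast) = 1$ for every retained vertex, any vertex with $g_K < 1$ being deletable by the triangle inequality without increasing the length. Then I invoke first-order optimality for the problem of minimizing $\ell_T$ subject to $\phi \ge 1$. The gradient of $\ell_T$ in $q_i$ is $p_i - p_i'$, while the (envelope) subgradient of the min--max constraint $\phi$ in $q_i$ at the optimal translate is $\mu_i\, n_K(q_i^\ast)$ with $\mu_i \ge 0$ and $\sum_i \mu_i = 1$; the stationarity condition therefore reads $p_i' - p_i = -\nu\mu_i\, n_K(q_i^\ast)$, which is precisely the reflection rule with $\lambda_i = \nu\mu_i$. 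Thus $Q^\ast \in \mathcal Q_T(K)$ and $L = \ell_T(Q^\ast) \ge \xi_T(K)$, closing the loop.

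Finally, for $m \le n+1$: the sets $q_i - \inte K$ are open and convex in the $n$-dimensional space $V$, so if their total intersection is empty, Helly's theorem produces a subfamily of size $\le n+1$ with empty intersection; the corresponding sub-configuration still fails to fit, hence lies in $\mathcal P_{m'}(K)$ for some $m' \le n+1$, and passing to it does not increase $\ell_T$ by the triangle inequality. (Existence of the minimizer then follows by compactness once the vertices are normalized onto $\partial K$, using that ``fitting'' is an open condition.) I expect the main obstacle to be the rigorous handling of the nonsmooth min--max constraint $\phi \ge 1$ in the first-order argument: in particular, justifying the envelope formula for $\partial_{q_i}\phi$ at a possibly non-unique optimal translate $t$, and ensuring that the multiplier $\nu$ and the weights $\mu_i$ are genuinely positive, so that stationarity delivers honest reflections ($\lambda_i > 0$) rather than degenerate straight-through vertices.
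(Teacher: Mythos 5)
The paper does not actually prove Theorem~\ref{theorem:bezdeks}: it is imported verbatim from the cited references (Bezdek--Bezdek and the follow-up arXiv:1401.0442), so there is no in-paper argument to compare against. Judged on its own, your proposal reproduces the architecture of the known proof. The easy half is essentially complete and correct: the telescoping of $p_i'-p_i=-\lambda_i n_K(q_i)$ around a closed trajectory, the identification of $0\in\conv\{n_K(q_i)\}$ with $0\in\partial\phi(0)$, and the conclusion that the vertex set lies in $\mathcal P_m(K)$ are all sound, as is the Helly/triangle-inequality reduction to $m\le n+1$ (and the existence of a minimizer, once $m$ is bounded).

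The genuine gap is in the substantial half, $L\ge\xi_T(K)$, and it sits exactly where you flag it. Two points in particular are not yet proofs. First, deleting vertices with $g_K(q_i^\ast)<1$ ``by the triangle inequality'' controls the length but not membership in $\mathcal P_{m'}(K)$: removing a point can only make fitting easier, so you must argue that the \emph{active} subfamily alone is still non-fitting. This does follow from your own setup --- $0\in\partial\phi(0)$ involves only active indices, so $t=0$ still minimizes the reduced max-function with value $1$ --- but it has to be said, since the triangle inequality alone does not give it. Second, the stationarity step is for a \emph{reverse-convex} constraint ($\Psi(Q)=\min_t\max_i g_K(q_i-t)$ is convex in $Q$ and the feasible set is $\{\Psi\ge 1\}$), so the multiplier rule you invoke is not off-the-shelf KKT; and even granting it, you only obtain $\lambda_i=\nu\mu_i\ge 0$. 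If some active $\mu_i$ vanishes, the vertex is a straight-through point, not a reflection, and removing it re-opens the non-fitting question. The standard repair --- passing at the outset to an irreducible non-fitting subfamily, for which Carath\'eodory forces all weights in $0=\sum\mu_i n_K(q_i^\ast)$ to be strictly positive, and checking $\nu>0$ by exhibiting an explicit feasible shortening when $\nabla\ell_T\neq 0$ fails to align with $\partial\Psi$ --- is the actual content of the cited proofs and is missing here. Until those steps are carried out, the second inequality is a plan rather than a proof.
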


\begin{remark}
\label{remark:xidef}
The right hand side of the above formula is defined without any assumption
on the smoothness of $K$ and $T$. In what follows we use it as the definition of $\xi_T(K)$ even
when neither $K$, nor $T$ are smooth.
\end{remark}

Also in what follows we use the notion of the \emph{width} of a body $K \subset V$:
$$
w_T(K) = \min_{p \in V^*, \| p \|_{T^{\circ}} = 1} (\max_{q \in K} \langle p,q \rangle - \min_{q \in K} \langle p,q \rangle).
$$

Theorem~\ref{theorem:bezdeks} allows to give elementary proofs (see~\cite{abks2014}) for some properties of the value $\xi_T(K)$. Originally they were proved in~\cite{aao2012} using non-trivial symplectic techniques.

\begin{itemize}
\item
Monotonicity:
\begin{equation}
\label{equation:monotonicity}
\xi_T(K) \le \xi_T(L)\ \text{when}\ K\subseteq L;
\end{equation}

\item
Symmetry:
\begin{equation}
\label{equation:symmetry}
\xi_T(K) = \xi_K(T);
\end{equation}

\item
Brunn--Minkowski type inequality:
\begin{equation}
\label{equation:bm}
\xi_T(K+L) \ge \xi_T(K) + \xi_T(L).
\end{equation}
\end{itemize}

The structure of this paper is as follows. Section~\ref{rogshep} is devoted to some inequalities of the type 
$\xi_{T}(K-K) \le c \xi_{T}(K)$, mainly in the plane. In section~\ref{traeceq} we study some examples when all billiard trajectories have the same length. In section~\ref{constwid} we give one more proof for the theorem about shortest billiard trajectories in constant width bodies in the plane.

\textbf{Acknowledgments.}

The author is grateful to Roman~Karasev for constant attention to this work and to Yaron~Ostrover for useful remarks.

\section{Rogers--Shepard type inequalities}
\label{rogshep}

The famous Rogers--Shepard inequality estimates the volume of the Minkowski difference $K-K$ from above in terms of the volume of the original convex body $K$, that is $\volu(K-K) \le \binom{2n}{n}\volu K$, where $n=\dim K$.

The natural question in our setting is to estimate $\xi_{T}(K-K)$ from above in terms of $\xi_{T}(K)$.

First, we consider the general case of Finsler possibly non-symmetric norms.

\begin{theorem}
\label{theorem:rogshepfinsl}
For convex bodies $K, T$ in $\mathbb{R}^n$ an estimate holds:
$$
\xi_{T}(K-K) \le (n+1) \xi_{T}(K).
$$
In $\mathbb{R}^2$ this estimate is sharp.
\end{theorem}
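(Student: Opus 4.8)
The plan is to deduce the inequality from a covering relation between the difference body $K-K$ and a dilate of $K$, and then to establish sharpness in $\mathbb R^2$ by an explicit triangle example.

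For the inequality, the key input is the classical bound on the Minkowski asymmetry of a convex body about its centroid: if $c$ is the centroid of $K$, then $-(K-c)\subseteq n(K-c)$, with equality realized by simplices. Since the difference body is translation invariant, this yields
$$
K-K=(K-c)+\bigl(-(K-c)\bigr)\subseteq (K-c)+n(K-c)=(n+1)(K-c),
$$
and monotonicity~\eqref{equation:monotonicity} reduces the problem to the dilate $(n+1)(K-c)$. I then use two elementary scaling properties of $\xi_T$ that follow directly from the Bezdek-type description in Theorem~\ref{theorem:bezdeks} (taken as the definition, per Remark~\ref{remark:xidef}): translation invariance in $K$, since replacing $K$ by $K+t$ only translates every competitor $Q\in\mathcal P_m(K)$ and leaves $\ell_T$ unchanged; and positive homogeneity of degree one, $\xi_T(\lambda K)=\lambda\,\xi_T(K)$ for $\lambda>0$, because $\mathcal P_m(\lambda K)=\lambda\,\mathcal P_m(K)$ while $\ell_T(\lambda Q)=\lambda\,\ell_T(Q)$. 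Combining these gives
$$
\xi_T(K-K)\le \xi_T\bigl((n+1)(K-c)\bigr)=(n+1)\,\xi_T(K-c)=(n+1)\,\xi_T(K).
$$

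For sharpness in $\mathbb R^2$ the asymmetry inclusion is an equality exactly for triangles, so I would take $K$ to be a triangle; then $n+1=3$ and $K-K$ is a centrally symmetric hexagon. The task is to find a body $T$ with $\xi_T(K-K)=3\,\xi_T(K)$, and I would compute both sides directly via Theorem~\ref{theorem:bezdeks}, where in the plane the minimum is attained with $m\le 3$. A structural remark drives the choice of $T$: a two-point ``back and forth'' configuration detects only the $T$-width, and since $\mathrm{width}_{K-K}(p)=2\,\mathrm{width}_K(p)$ for every direction $p$, such a configuration contributes length at most $2w_T(K-K)=4w_T(K)$ inside $K-K$. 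Hence if $\xi_T(K)$ were realized by a $2$-bounce we would only get $\xi_T(K-K)\le 2\,\xi_T(K)$; the sharp factor $3$ therefore forces $\xi_T(K)$ to be realized by a genuine $3$-periodic trajectory running along all three side directions of the triangle, and $T$ must be selected so that this triangular orbit is the global minimizer in both $K$ and $K-K$.

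The main obstacle is precisely this explicit verification. For a chosen (highly symmetric, e.g.\ equilateral) triangle $K$ and a compatible $T$, one must write down the minimal non-fitting configurations by hand, compare their $T$-lengths, and confirm both that no shorter $2$- or $3$-bounce exists in the hexagon $K-K$ and that the minimizing triangular trajectory scales by exactly the factor predicted by $K-K\subseteq 3(K-c)$. Equivalently, one must check that the monotonicity step is tight for this pair, i.e.\ that the shortest orbit of the dilate $3(K-c)$ already lies inside $K-K$; I expect enough symmetry in $K$ and $T$ to make these extremal configurations computable in closed form.
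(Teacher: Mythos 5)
Your first half is correct and is exactly the paper's argument: the Helly/centroid bound $-(K-c)\subseteq n(K-c)$ gives $K-K\subseteq (n+1)(K-c)$, and monotonicity~\ref{equation:monotonicity} together with translation invariance and homogeneity of $\xi_T$ in $K$ yields the inequality. Spelling out the scaling properties from the $\mathcal P_m$ description is a reasonable (and harmless) elaboration of what the paper leaves implicit.

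The sharpness claim, however, is not proved: you correctly identify the right example (an equilateral triangle $K$, with $T$ adapted to it --- the paper takes $T=K^{\circ}$), and you correctly observe that a $2$-bounce in $K-K$ can only account for a factor of $2$, so the factor $3$ must come from a genuine $3$-bounce orbit; but you then explicitly defer the ``explicit verification'' that no competitor in the hexagon $K-K$ is shorter than $3\,\xi_T(K)$. That verification is the entire content of the sharpness proof, and it is not a routine computation one can wave at: the paper's argument runs through a case analysis of all $2$- and $3$-periodic polygonal lines that cannot be translated into $\inte(K-K)$ (Theorem~\ref{theorem:bezdeks} caps the period at $3$ in the plane). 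For the $2$-periodic case one must show the two reflection points lie on opposite hexagon sides and then bound $\ell_{(3K)^{\circ}}(Q)=\frac{|q_1-q_2|}{|v_1|}+\frac{|q_1-q_2|}{|v_2|}\ge 1+2$, where the two terms are genuinely unequal because the norm $\|\cdot\|_{(3K)^{\circ}}$ is non-symmetric --- a point your width heuristic glosses over, since the identity $\xi_B(L)=2w_B(L)$ you are implicitly leaning on is only available in the Euclidean/symmetric setting. For the $3$-periodic case one must show that either two of the three sides used are opposite (reducing to the previous case), or the configuration is, up to reversing orientation and replacing $3K$ by $-3K$, already a competitor inside the triangle $3K$ itself. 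Without choosing a concrete $T$ and carrying out this enumeration, the assertion that the factor $n+1=3$ is attained remains unestablished.
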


\begin{proof}

It can be easily deduced from Helly's theorem that for any body $K \subset \mathbb{R}^n$ the body $-K$ can be translated into $nK$. Moreover, if the center of gravity of $K$ is located at the origin then $-K \subset nK$. Thus $K-K$ can be translated into $(n+1)K$ and monotonicity~\ref{equation:monotonicity} yields the required estimate.

Now we show the sharpness for the case of an equilateral triangle $K$ and $T = K^{\circ}$. In other words, we check that in the hexagon $K-K$ there are no billiard trajectories shorter than in the triangle $3K$ provided we measure lengths in the norm with the unit body $3K$ (we inflate the unit body for our convenience, using the homogeneity of $\xi_T(K)$ in $T$). The length and the form of the trajectory delivering $\xi_{(3K)^{\circ}}(3K) = 3$ are known from the proof of~\cite[Theorem~3.1]{abks2014}.

\begin{figure}
\centering
\includegraphics[width=0.5\textwidth]{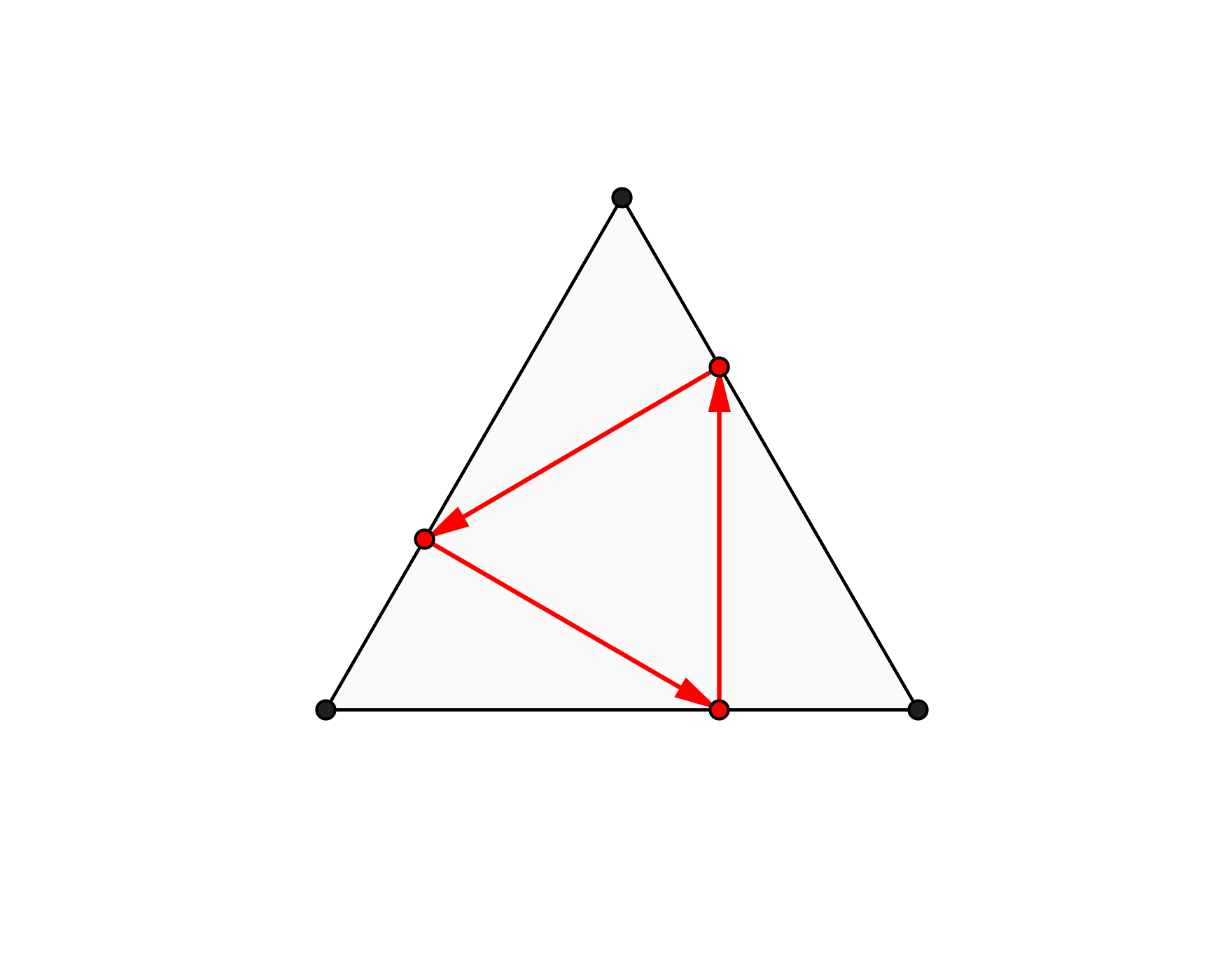}
\caption{The form of the trajectory delivering $\xi_{(3K)^{\circ}}(3K) = 3$}
\label{picture:triangle}
\end{figure}

Consider all possible candidates for the role of the shortest trajectory in $K-K$. Theorem~\ref{theorem:bezdeks} implies that it can only be 2- or 3-periodic polygonal line that cannot be translated into $\inte K$.

\begin{figure}
\centering
\includegraphics[width=1.0\textwidth]{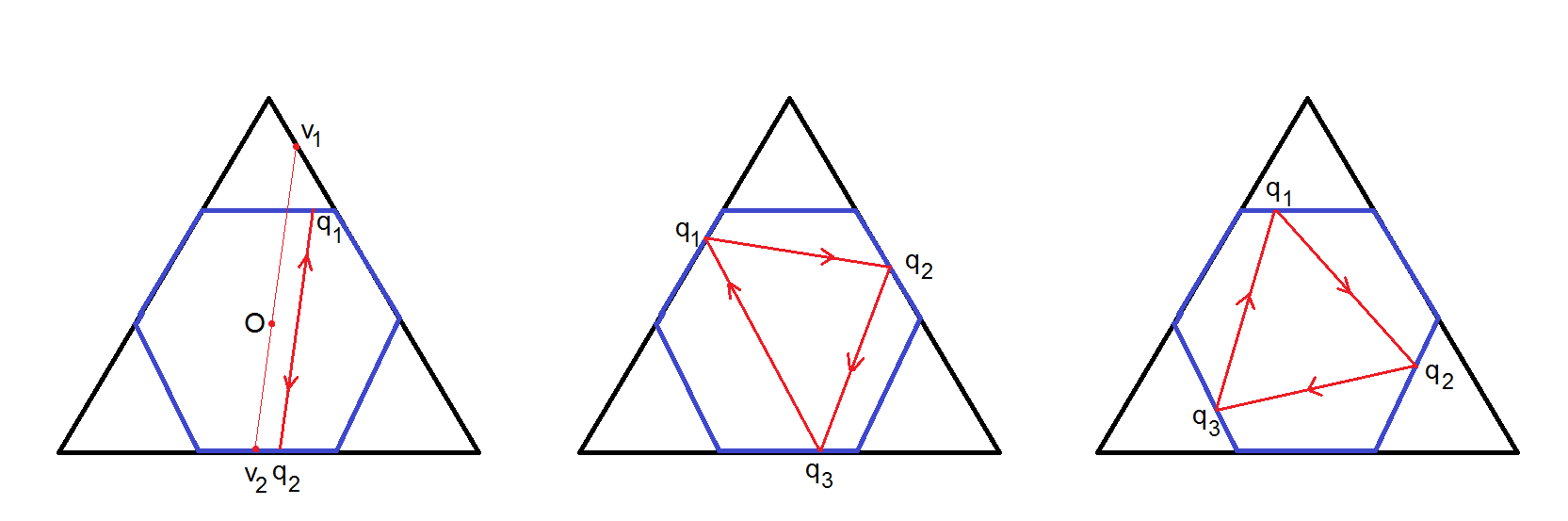}
\caption{Candidates for the role of the shortest trajectory in $K-K$}
\label{picture:rogshepfinsl}
\end{figure}

\begin{enumerate}
\item The case of 2-periodic trajectory. Since it cannot be translated into $\inte K$, the both reflections must occur at opposite hexagon sides like in Figure~\ref{picture:rogshepfinsl}.
In the notation from the figure we have
$$
\ell_{(3K)^{\circ}}(Q) = \frac{|q_1-q_2|}{|v_1|} + \frac{|q_1-q_2|}{|v_2|} \ge 1 + 2 = \xi_{(3K)^{\circ}}(3K)
$$
(where $|\cdot|$ denotes the usual Euclidean norm).

\item The case of 3-periodic trajectory. Three reflections will occur at the points of some three sides. If, among these sides, there are two opposite then the problem is reduced to the previous case. Otherwise there are two possibilities depicted in Figure~\ref{picture:rogshepfinsl}. The first possibility of the trajectory also represents the trajectory in the triangle $3K$ so it cannot be shorter than $\xi_{(3K)^{\circ}}(3K)$. The second possibility can be easily reduced to the first one: One should consider the reversed version of the trajectory and measure its length in the norm with the unit body $-3K$ (it will be the same as the length of the direct version in the norm with the unit body $3K$). After such a modification we are in the setting of the first possibility.
\end{enumerate}
\end{proof}

In this proof the symplectic capacity of $(K-K) \times K^{\circ}$ was investigated. The similar question --- about the symplectic capacity of $K \times (K-K)^{\circ}$ --- was considered in~\cite{akp2014} (where an estimate from below $1+1/n$ is established) and in~\cite{nir} (where its sharpness is shown).

Now we turn to the Euclidean case when the norm in $V = \mathbb{R}^n$ is specified by the standard Euclidean ball, which we denote by $B$ from now on.

\begin{theorem}
\label{theorem:rogshepeucl}
For a convex body $K \subset \mathbb{R}^2$ a sharp estimate holds:
$$
\xi_{B}(K-K) \le \frac{4}{\sqrt{3}} \xi_{B}(K).
$$
\end{theorem}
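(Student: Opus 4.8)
The plan is to squeeze both sides of the inequality between the Euclidean minimal width $w_B(\cdot)$, using the equilateral triangle (where the constant $\sqrt3$ comes from) as the extremal shape. Concretely, I would isolate two estimates, an upper bound $\xi_B(K-K)\le 4\,w_B(K)$ and a lower bound $\xi_B(K)\ge\sqrt3\,w_B(K)$, after which the theorem is immediate:
$$
\xi_B(K-K)\le 4\,w_B(K)=\tfrac{4}{\sqrt3}\cdot\sqrt3\,w_B(K)\le \tfrac{4}{\sqrt3}\,\xi_B(K).
$$

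For the upper bound, I would first record that the difference body doubles every width: since $h_{K-K}(u)=h_K(u)+h_K(-u)$, the width of $K-K$ in a direction $u$ equals $2w_u(K)$, hence $w_B(K-K)=2\,w_B(K)$. The degenerate $2$-periodic path running back and forth along a minimal-width direction of $K-K$ belongs to $\mathcal P_2(K-K)$ and has length $2\,w_B(K-K)=4\,w_B(K)$, so Theorem~\ref{theorem:bezdeks} (together with Remark~\ref{remark:xidef}) gives $\xi_B(K-K)\le 4\,w_B(K)$ at once.

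The heart of the proof, and the step I expect to be the main obstacle, is the lower bound $\xi_B(K)\ge\sqrt3\,w_B(K)$. By Theorem~\ref{theorem:bezdeks} the minimizing trajectory is a $2$- or $3$-bounce closed billiard path; a $2$-bounce has length at least $2\,w_B(K)\ge\sqrt3\,w_B(K)$, so the interesting case is a genuine triangle $q_1q_2q_3$ with outer normals $n_1,n_2,n_3$ obeying the equilibrium condition $0\in\conv\{n_1,n_2,n_3\}$. Taking the supporting lines of $K$ through the $q_i$ with these normals, their intersection is a bounded triangle $\Delta\supseteq K$, and the reflection law at each $q_i$ is precisely the billiard law in $\Delta$; hence $q_1q_2q_3$ is the Fagnano (orthic) trajectory of the acute triangle $\Delta$. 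Writing $\alpha,\beta,\gamma$ for the angles of $\Delta$ and $R$ for its circumradius, an elementary computation gives orthic perimeter $4R\sin\alpha\sin\beta\sin\gamma$ and minimal width (shortest altitude) $2R\sin\alpha\sin\beta\sin\gamma/\max(\sin\alpha,\sin\beta,\sin\gamma)$, so that
$$
\frac{\xi_B(K)}{w_B(\Delta)}=2\max(\sin\alpha,\sin\beta,\sin\gamma)\ge 2\sin\tfrac{\pi}{3}=\sqrt3,
$$
since the largest angle of an acute triangle lies in $[\pi/3,\pi/2)$. Because $K\subseteq\Delta$ forces $w_B(K)\le w_B(\Delta)$ by monotonicity of the minimal width, this yields $\xi_B(K)\ge\sqrt3\,w_B(\Delta)\ge\sqrt3\,w_B(K)$.

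Finally, for sharpness I would take $K$ to be an equilateral triangle of side $s$. Then $\Delta=K$ is equilateral and the orthic trajectory is the medial triangle, so $\xi_B(K)=\tfrac32 s=\sqrt3\,w_B(K)$ attains equality above, while $K-K$ is a regular hexagon whose shortest trajectory is the $2$-bounce of length $2\,w_B(K-K)=4\,w_B(K)=2\sqrt3\,s$, giving the extremal ratio $\tfrac{4}{\sqrt3}$. The two points needing care are the identification of the minimizing triangular trajectory with the orthic triangle of an \emph{acute} circumscribing triangle (non-acute or degenerate configurations must be folded into the $2$-bounce case), and the concluding hexagon computation checking, as in the proof of Theorem~\ref{theorem:rogshepfinsl}, that no triangular trajectory undercuts the $2$-bounce.
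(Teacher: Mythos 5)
Your proof follows essentially the same route as the paper's: both reduce the statement to the lower bound $\xi_B(K)\ge\sqrt3\,w_B(K)$ via $w_B(K-K)=2w_B(K)$, handle the $2$-bounce case trivially, and establish the $3$-bounce case by circumscribing a triangle about the trajectory and identifying the trajectory with its orthic triangle (the paper computes the ratio as $2|AT|/|AA'|\ge\sqrt3$ via the excircle and tangent lengths, you via the circumradius identity $4R\sin\alpha\sin\beta\sin\gamma$ --- equivalent elementary computations), with the equilateral triangle giving sharpness. The only substantive point you flag but defer is that $\xi_B(K-K)$ actually \emph{equals} $2\,w_B(K-K)$ for the regular hexagon (needed for sharpness, since your lower bound $\sqrt3\,w_B$ applied to the hexagon is weaker than $2w_B$); the paper gets this from Ghomi's equality $\xi_B(L)=2w_B(L)$ for centrally symmetric $L$, which also lets it state your $2$-bounce upper bound as an equality from the start.
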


\begin{proof}

In~\cite{ghomi2004} it was shown that for the a body $L$ that is centrally symmetric with respect to the origin an equality holds:
$\xi_B(L) = 2 w_B(L)$ (another elementary proof can be found in~\cite{abks2014}). Also it is easy to see that $w_B(K-K) = 2w_B(K)$.

Therefore $\xi_{B}(K-K) = 4w_B(K)$ and the required estimate can be reformulated in the following way:

$$
\xi_{B}(K) \ge \sqrt{3} w_B(K).
$$

Assume the contrary: For a body $K$ in the plane it happens that $\xi_{B}(K) < \sqrt{3} w_B(K)$.
In particular, the shortest billiard trajectory is 3-periodic, since the shortest 2-periodic trajectory has the length $2w_B(K) > \sqrt{3} w_B(K)$. (In the case of non-smooth $K$ we understand the shortest billiard trajectory in the sense of~\ref{remark:xidef}.)

Let $A,B,C$ be the vertices of such a trajectory. Consider the supporting lines for $K$ at the points $A,B,C$. (In the case of non-smooth $K$ when supporting lines cannot be determined uniquely we can choose them to be orthogonal to bisectors of $\angle A, \angle B, \angle C$.) These lines form a $\triangle A'B'C'$, where $A,B,C$ are altitudes' bases. $K \subseteq \triangle A'B'C'$ and monotonicity of the width yields $w_B(K) \le w_B(\triangle A'B'C')$; but in $\triangle A'B'C'$ the 3-periodic trajectory has the same length as in $K$. Thus it is sufficient to obtain the contradiction with an inequality $\xi_{B}(\triangle A'B'C') < \sqrt{3} w_B(\triangle A'B'C')$.

\begin{figure}
\centering
\includegraphics[width=0.7\textwidth]{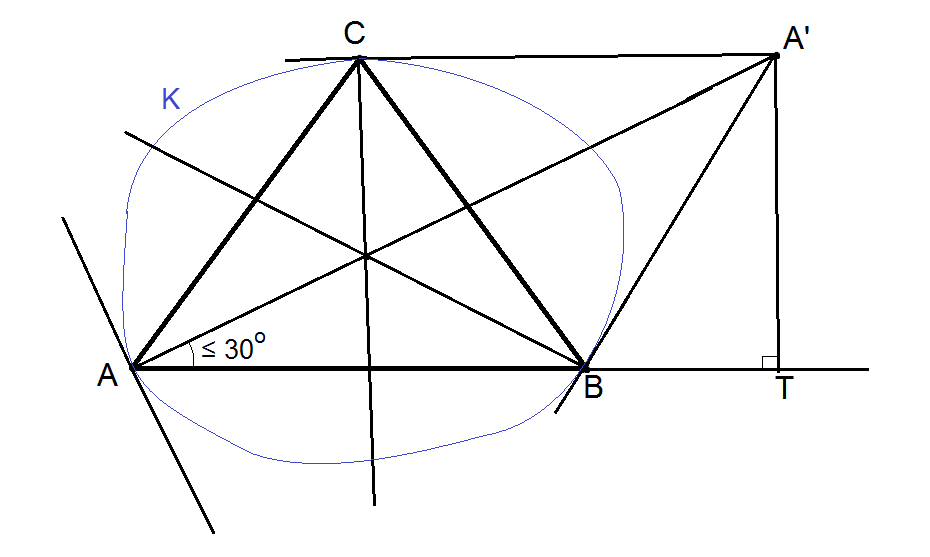}
\caption{Illustration to the proof of theorem~\ref{theorem:rogshepeucl}}
\label{picture:rogshepeucl}
\end{figure}

Let $\angle A$ be the minimal angle in $\triangle ABC$. Then $\angle A'$ is the maximal in $\triangle A'B'C'$ and the width of $\triangle A'B'C'$ equals the length of the altitude from $A'$, i.e. $w_B(\triangle A'B'C') = |AA'|$. Let $T$ be the base of the perpendicular to the line $(AB)$ through $A'$. $A'$ is the center of the excircle $\triangle ABC$; hence, $|AT| = \frac12 (|AB|+|BC|+|CA|) = \frac12 \xi_{B}(\triangle A'B'C')$. But $\angle TAA' \le 30^{\circ}$; hence $\dfrac{\xi_{B}(\triangle A'B'C')}{w_B(\triangle A'B'C')} = 2 \dfrac{|AT|}{|AA'|} \ge 2 \sin 30^{\circ} = \sqrt{3}$. This proves the required estimate.

To show the sharpness, consider the equilateral triangle: The ratio $\dfrac{\xi_B(\cdot)}{w_B(\cdot)}$ (i.e. the ratio between perimeter of the midpoint triangle and the altitude) indeed equals~$\sqrt{3}$.
\end{proof}

\section{When all the billiard trajectories have the same length}
\label{traeceq}

In the proof of Theorem~\ref{theorem:rogshepfinsl} we have seen that the configuration $(K-K) \times K^{\circ}$, where $K$ is an equilateral triangle, has a peculiar property: Both shortest 2-periodic polygonal line and shortest 3-periodic polygonal line that cannot be translated into $\inte(K-K)$ have the same length equal to 9. But both of these polygonal lines aren't classical billiard trajectories because they have either coordinates or momenta in the vertices of polygons $K-K$ and $K^{\circ}$.

Further we consider only classical billiard trajectories, i.e. require $q_i$ and $p_i$ must belong to the smooth pieces of $\partial(K-K)$ and $\partial(K^{\circ})$. Straightforward consideration shows that there are no 2-periodic and 3-periodic in this configuration. An example of a 4-periodic trajectory is given in Figure~\ref{picture:traecinhex}.

\begin{figure}
\centering
\includegraphics[width=0.85\textwidth]{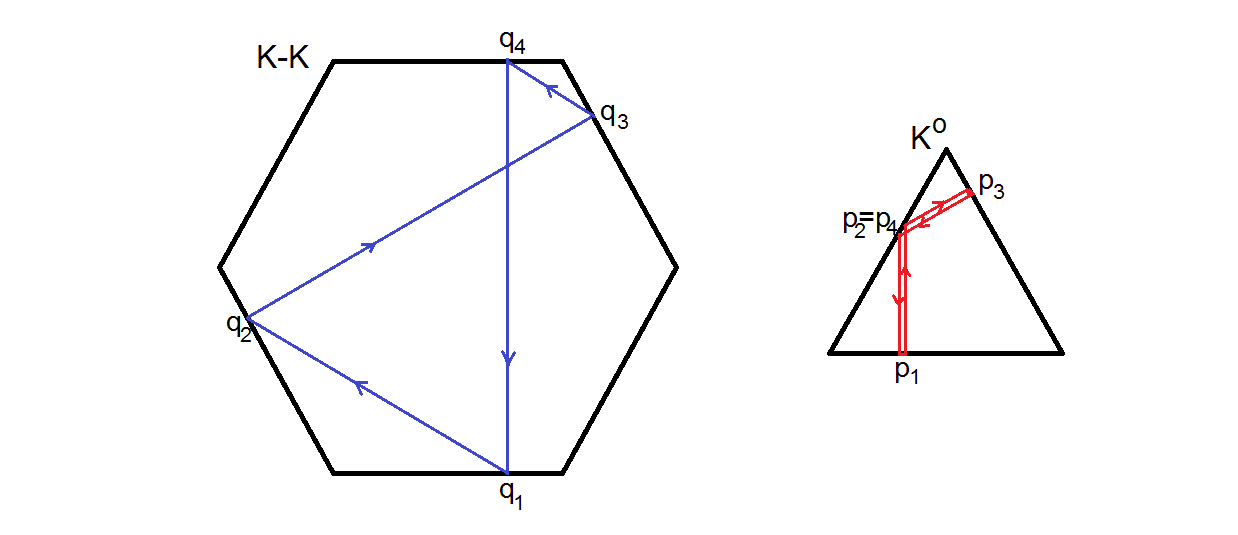}
\caption{4-periodic classical billiard trajectory}
\label{picture:traecinhex}
\end{figure}

It can be checked easily that all classical trajectories in this configuration have the same form (up to rotations, symmetries and translations of the starting point along the sides): Any initial coordinate $q_1$ and the initial momentum $p_1$ (chosen in such a way that the normal $-n_{K-K}(q_1)$ drawn from the point $p_1$ looks inside $K^{\circ}$) can be extended to a 4-periodic classical billiard trajectory. Its length equals 9 (the same as for the shortest 2- and 3-periodic polygonal lines that cannot be translated into $\inte(K-K)$).
If we forbid trajectories that pass the same route multiply (i.e. consider only \emph{simple} trajectories) then the following observation holds:

\begin{proposition}
Any simple classical billiard trajectory in the configuration $(K-K) \times K^{\circ}$ (where $K$ is an equilateral triangle with the center at the plane origin) is 4-periodic and has the length 9.

Also, arbitrarily close to any point of $\partial((K-K) \times K^{\circ})$ there passes a certain shortest billiard trajectory.
\end{proposition}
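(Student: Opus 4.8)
The plan is to reduce the statement to a finite combinatorial analysis of the piecewise-linear billiard dynamics, since both bodies are polytopes: $K-K$ is a regular hexagon and $K^{\circ}$ is a triangle (the polar of the equilateral triangle $K$). I normalize $K$ so that its vertices $e_1,e_2,e_3$ are unit vectors with $e_1+e_2+e_3=0$ (say at angles $90^{\circ},210^{\circ},330^{\circ}$); then the six edges of $K-K$ have outer normals $\pm e_1,\pm e_2,\pm e_3$. The first step is to read off what \emph{classical} forces: since each momentum $p_i$ must lie in the relative interior of a facet of $K^{\circ}$, and the velocity is the point of $K$ maximizing $\langle p_i,\cdot\rangle$, every segment travels in one of the three directions $e_1,e_2,e_3$, and a momentum on the facet of $K^{\circ}$ with outer normal $e_i$ corresponds exactly to velocity $e_i$. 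Writing the reflection rule $p'-p=-\lambda n$ at the hexagon edge with normal $n\in\{\pm e_i\}$ then yields a transition automaton: at a $+e_i$ edge the incoming velocity is $e_i$ and the outgoing one is $e_j$ or $e_k$, while at a $-e_i$ edge the outgoing velocity is $e_i$ and the incoming one is $e_j$ or $e_k$.

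Using this automaton I would first exclude short periods. A $2$-periodic classical trajectory would require opposite velocities $v,-v$ among the vertices of $K$, impossible for a triangle. A $3$-periodic one must use all three directions once (any two $e_i$ are independent, so only $\sum d_ie_i=0$ with all three present closes up); writing out the incidence of the three turning points with the hexagon edges then forces one of them onto a corner of the hexagon (equivalently a momentum onto a vertex of $K^{\circ}$), so it is not classical. For period $4$ the only admissible velocity word is, up to symmetry, $e_ie_je_ie_k$; closure forces the two ``$j,k$'' segments to have a common length $s$ equal to the sum of the two ``$e_i$'' segments. Solving the four edge-incidence equations shows that the reflections cannot sit on the two edges with normal $\pm e_i$ (this gives only degenerate or out-of-range solutions), but must occur on the four edges with normals $\pm e_j,\pm e_k$; this produces a one-parameter family of self-crossing ``bowtie'' quadrilaterals whose turning points run over the open slanted edges, exactly the trajectory of Figure~\ref{picture:traecinhex}. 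Finally, to rule out longer simple trajectories I would check that this family, together with its three rotational and its reflected copies and with the free choice of $p_1$ on the appropriate facet, exhausts all admissible initial data $(q_1,p_1)$; since the orbit through each datum is unique, the fourth-return map is the identity and \emph{every} classical trajectory is $4$-periodic.

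For the length, each direction $e_i$ is a vertex of $K=(K^{\circ})^{\circ}$, so $\|e_i\|_{K^{\circ}}=1$ and the $\|\cdot\|_{K^{\circ}}$-length of a segment equals its coefficient $d_k$. Hence the total length is $d_1+d_2+d_3+d_4=3s$, and the incidence equations pin down $s=3$, giving length $9$ for every member of the family. This is the same value found for the $2$- and $3$-periodic generalized trajectories, so in this configuration all simple classical trajectories are in fact length-minimizing.

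The density assertion I would get by lifting the orbits to $\partial((K-K)\times K^{\circ})$ and letting the parameters vary. Along a straight segment the lifted orbit sweeps $\{(q,p):q\in\inte(K-K),\ p\in\partial K^{\circ}\}$ with $p$ fixed on a facet, and along a reflection the point $q$ stays on $\partial(K-K)$ while $p$ runs along the chord of $K^{\circ}$ from $p_k$ to $p_{k+1}$, i.e. through $\inte K^{\circ}$; thus both strata $\partial(K-K)\times K^{\circ}$ and $(K-K)\times\partial K^{\circ}$ are reached. As the starting point moves along the edges, the momentum ranges over the relevant facets, and the three rotational copies supply segments in each direction (long diagonals through the center, short sides near the edges), so the union of orbits becomes dense in the whole boundary. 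I expect the main obstacle to be the combinatorial classification of the second paragraph --- deriving the automaton from the non-smooth reflection law, excluding every period other than $4$ (in particular showing the fourth-return map is the identity, so that \emph{all} trajectories are $4$-periodic), and checking that the $\pm e_i$-edge incidences degenerate so that only the bowtie survives; the density step is then routine, its only delicate point being that the interior-momentum stratum $\partial(K-K)\times\inte K^{\circ}$ is genuinely covered, which is exactly what the reflection chords provide.
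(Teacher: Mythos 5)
Your proposal is correct and follows the route the paper intends: the paper offers no written proof of this proposition at all, merely asserting that the nonexistence of $2$- and $3$-periodic classical trajectories and the $4$-periodicity and common length of all the others follow by ``straightforward consideration'', so your reduction to the velocity automaton on the three directions $e_1,e_2,e_3$, the closure and edge-incidence equations forcing the length-$9$ bowtie family, and the reflection-chord argument for density is precisely the verification being left to the reader. The individual steps you defer all check out (the $3$-periodic equilateral triangle is forced to have its vertices at corners of the hexagon, the $\pm e_i$-edge incidences for the word $e_ie_je_ie_k$ lead to out-of-range solutions, and every admissible initial datum $(q_1,p_1)$ does close up after four bounces with total length $9$), so filling in the finite case analysis would yield a complete proof.
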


Such a situation is supposed to take place in the cases of equality of Mahler's conjecture (see~\cite{apb2012}). Further we establish the similar result for the entire class of bodies which are famous for delivering the equality to Mahler's conjecture:

\begin{definition}
Let $\|\cdot\|^{(1)}$ be a norm in a real vector space $V_1 = \mathbb{R}^k$ with a unit body $K$ and $\|\cdot\|^{(2)}$ be a norm in a real vector space $V_2 = \mathbb{R}^l$ with a unit body $L$. Let's define the \emph{direct sum of the bodies} $K$ and $L$ according to the norms $\ell_1$ and $\ell_{\infty}$ in the following way:

\begin{itemize}
\item we define $K \oplus_1 L$ as a unit body of a norm $\|\cdot\|^{(1)} + \|\cdot\|^{(2)}$ in the space $V_1 \oplus_1 V_2 = \mathbb{R}^{k+l}$;

\item we define $K \oplus_{\infty} L$ as a unit body of a norm $\max \{ \|\cdot\|^{(1)}, \|\cdot\|^{(2)}\}$ in the space $V_1 \oplus_{\infty} V_2 = \mathbb{R}^{k+l}$.
\end{itemize}
\end{definition}

\begin{definition}
The \emph{Hanner polytopes} are the bodies constructed in the following way:

\begin{itemize}
\item a segment $[-1,1]$ is the 1-dimensional Hanner polytope;

\item if $K \subset \mathbb{R}^k$ and $L \subset \mathbb{R}^l$ are the Hanner polytopes, then the bodies $K \oplus_1 L$ and $K \oplus_{\infty} L$ are the Hanner polytopes $\mathbb{R}^{k+l}$.
\end{itemize}

Getting closer to Mahler's conjecture, we need to consider a convex body together with its polar. The pairs consisting of a Hanner polytope and its polar are constructed inductively in the following way:

\begin{itemize}
\item in $\mathbb{R}^1$ it is $[-1,1] \times [-1,1]$;

\item using the pairs $K \times K^{\circ}$ and $L \times L^{\circ}$ one can construct the pairs $K \oplus_1 L \times K^{\circ} \oplus_{\infty} L^{\circ}$ and $K \oplus_{\infty} L \times K^{\circ} \oplus_1 L^{\circ}$.
\end{itemize}
\end{definition}

For what follows, it might be helpful to explain what are the facets of polytopes $K \oplus_{\infty} L$ and $K \oplus_1 L$ in terms of the facets of $K, L$:

\begin{itemize}
\item
a facet of $K \oplus_{\infty} L$ can be represented in the form $F_K \times L$ or $K \times F_L$ (here $F_M$ stands for a facet of $M$).
\item
a facet of $K \oplus_1 L$ can be represented in the form $\conv (F_K \times 0_l \cup 0_k \times F_L)$ (here $0_m$ stands for the origin in $\mathbb{R}^m$).
\end{itemize}

\begin{theorem}
\label{theorem:hanner}
Any simple classical billiard trajectory in a Hanner polytope $H \subset \mathbb{R}^n$ with geometry specified by its polar $H^{\circ}$

\begin{enumerate}
\item has length 4;
\item is $2n$-periodic (bounces $2n$ times);
\item is centrally symmetric with respect to the origin (i.e. $q_i = -q_{i+n}$) and the corresponding momenta trajectory is also centrally symmetric with respect to the origin (i.e $p_i = -p_{i+n}$).
\end{enumerate}
\end{theorem}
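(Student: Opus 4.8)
The plan is to argue by induction on the dimension $n$, using the recursive definition of Hanner polytopes together with the duality between the two sum operations. Every $n$-dimensional Hanner polytope $H$ decomposes as $K \oplus_1 L$ or $K \oplus_\infty L$ with $K, L$ Hanner polytopes of smaller dimension, and since $(K \oplus_1 L)^\circ = K^\circ \oplus_\infty L^\circ$, the two cases are interchanged by passing from the pair $(H, H^\circ)$ to $(H^\circ, H)$. Because the conclusion (the length, the number of bounces, and the central symmetry of \emph{both} the position and the momentum trajectories) is invariant under the position--momentum duality underlying the symmetry relation~\eqref{equation:symmetry}, it suffices to treat $H = K \oplus_\infty L$, with $\dim K = k$, $\dim L = l$, $k + l = n$. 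The base case $n=1$ is the segment $[-1,1]$ with polar $[-1,1]$, where the only simple classical trajectory runs $-1 \to 1 \to -1$, has length $\|2\|_{H^\circ}+\|{-2}\|_{H^\circ}=4$, bounces twice, and is centrally symmetric.

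For the inductive step I would first unwind the product structure. Writing $q=(x,y)\in\mathbb{R}^k\times\mathbb{R}^l$ and $p=(a,b)$, the norm with unit ball $H=K\oplus_\infty L$ is $\|q\|_{H^\circ}=\max(\|x\|_{K^\circ},\|y\|_{L^\circ})$, while momenta lie on $\partial H^\circ=\partial(K^\circ\oplus_1 L^\circ)$, where $\|a\|_K+\|b\|_L=1$. On a smooth piece of $\partial H^\circ$ (the relative interior of a facet $\conv(F_{K^\circ}\times 0\cup 0\times F_{L^\circ})$) both normalized components $a/\|a\|_K$ and $b/\|b\|_L$ lie on smooth pieces of $\partial K^\circ$ and $\partial L^\circ$; in particular $a,b\neq 0$. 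The velocity splits as $v=d\|p\|_H=(v^K,v^L)$ with $v^K=d\|a\|_K\in\partial K$ and $v^L=d\|b\|_L\in\partial L$. Facets of $H$ have the form $F_K\times L$ or $K\times F_L$, so at each classical reflection exactly one of $x,y$ touches the boundary of its factor: a $K$-reflection changes $a$ by $-\lambda\,n_K(x)$ and fixes $b$, an $L$-reflection does the reverse. Hence $\|a\|_K\equiv\rho_K$ and $\|b\|_L\equiv\rho_L$ are conserved with $\rho_K+\rho_L=1$, the normalized momenta $\hat a=a/\rho_K$, $\hat b=b/\rho_L$ obey the reflection rule of $K$ (resp.\ $L$) at the reflections of the corresponding type, and, deleting the transparent reflections of the other type, the projections $x(\cdot)$ and $y(\cdot)$ are closed classical billiard trajectories in $K$ and $L$. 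Since $\|v^K\|_{K^\circ}=\|v^L\|_{L^\circ}=\|v\|_{H^\circ}=1$, every segment contributes the same amount $s_i$ to the total length, to the $K$-length, and to the $L$-length, so the common arc-length parameter $t$ agrees for all three, and the total length $T$ equals the length of $x(\cdot)$ and of $y(\cdot)$.

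I would then feed this into the induction. Each projection is a number of repetitions of a primitive simple classical trajectory, to which the inductive hypothesis applies: the primitive $K$-trajectory has length $4$, bounces $2k$ times, and is centrally symmetric, and likewise for $L$ with $2l$. Thus $T=4r_K=4r_L$ for the repetition counts, forcing $r_K=r_L=r$. The primitive pieces have arc-length period $4$, so $x(t+4)=x(t)$, $y(t+4)=y(t)$, and since $\hat a,\hat b$ are also $4$-periodic the whole phase trajectory $(q(t),p(t))$ retraces after arc-length $4$; simplicity of the given trajectory then forces $r=1$. Therefore the length is $4$ and the number of bounces is $2k+2l=2n$, proving (1) and (2). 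For (3) I would use that Hanner polytopes and their polars are origin-symmetric, so all norms involved are symmetric and their gradients are odd. Central symmetry of the primitive $K$-trajectory, $x_j=-x_{j+k}$ and $\hat a_j=-\hat a_{j+k}$, pairs segments of equal length, so the half-turn corresponds to arc-length $T/2=2$ and $x(t+2)=-x(t)$ for all $t$; likewise $y(t+2)=-y(t)$. As the three arc-length parameters coincide, these combine to $q(t+2)=-q(t)$ and, via $p=(\rho_K\hat a,\rho_L\hat b)$, to $p(t+2)=-p(t)$, i.e.\ $q_i=-q_{i+n}$ and $p_i=-p_{i+n}$.

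The step I expect to be the main obstacle is the legitimate reduction to the factors: a priori the projection of a simple trajectory need not be simple, and one must exclude the possibility that the interleaved $K$- and $L$-motions wind different numbers of times and glue into a longer simple trajectory in $H$. The resolution above---matching the three arc-length parameters to force $r_K=r_L$, then using the common primitive period together with simplicity of the full trajectory to force $r=1$---is exactly what makes the induction close. Verifying that the projected reflections are genuinely classical (both the touched facet of $K$ or $L$ and the normalized momentum lie on smooth pieces) and that no momentum component vanishes is the routine but essential book-keeping underlying it.
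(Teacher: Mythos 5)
Your proof is correct and follows essentially the same route as the paper's: induction on the recursive Hanner construction, reduction to one sum type via the coordinate--momentum duality behind~\eqref{equation:symmetry}, projection onto the two factors with a conserved splitting parameter, the key observation that on the $\ell_\infty$ side every piece of the trajectory has the same length as each of its two projections, and simplicity of the full trajectory forcing the common repetition count of the projections to be one. The only (immaterial) difference is that you analyze $H=K\oplus_\infty L$ where the paper analyzes $K\oplus_1 L$, so your $\ell_\infty$ bookkeeping lives on the coordinate side rather than on the momentum side.
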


\begin{proof}
The 1-dimensional case is trivial.

Let $K \subset \mathbb{R}^k$ and $L \subset \mathbb{R}^l$ be Hanner polylopes and suppose the statement is proven for $K \times K^{\circ}$ and $L \times L^{\circ}$. We are to deduce the statement for the configuration $K \oplus_1 L \times K^{\circ} \oplus_{\infty} L^{\circ}$. (The symmetry property~\ref{equation:symmetry} allows us not to check the configuration $K \oplus_{\infty} L \times K^{\circ} \oplus_1 L^{\circ}$.)

Let $q_1 \rightarrow q_2 \rightarrow \ldots \rightarrow q_m \rightarrow q_1$ be coordinates of nodes in the trajectory (all of them lie in the relative interior of facets of $K \oplus_1 L$) and let $p_1 \rightarrow p_2 \rightarrow \ldots \rightarrow p_m \rightarrow p_1$ be corresponding momenta (they lie in the relative interior of facets of $K^{\circ} \oplus_{\infty} L^{\circ}$).

We write $q_i = (q_i^1, q_i^2), p_i = (p_i^1, p_i^2)$, decomposing $(k+l)$-dimensional coordinates into $k$-dimensional and $l$-dimensional components.

Let $\|q_1^1\|_{K^{\circ}} = \alpha$, $\|q_1^2\|_{L^{\circ}} = 1 - \alpha$.

The difference $q_2 - q_1$ is proportional to the normal $n_{K^{\circ} \oplus_{\infty} L^{\circ}}(p_2)$ which has the form $(0_k, n_{L^{\circ}}(p_2^2))$ or $(n_{K^{\circ}}(p_2^1), 0_l)$ (due to the form of facets of $K^{\circ} \oplus_{\infty} L^{\circ}$). If the first case $q_2^1 = q_1^1$ (we say that \emph{first type change} occurred), in the second case $q_2^2 = q_1^2$ (\emph{second type change}). In both cases  $\|q_2^1\|_{K^{\circ}} = \alpha$, $\|q_2^2\|_{L^{\circ}} = 1 - \alpha$ and we can continue in the similar way: $\|q_i^1\|_{K^{\circ}} = \alpha$, $\|q_i^2\|_{L^{\circ}} = 1 - \alpha$.

The difference $p_2 - p_1$ is proportional to the normal $n_{K \oplus_1 L}(q_1)$ which has the form $(n_{K}(q_1^1), n_{L}(q_1^2))$ where $q_1 = (\alpha q_1^1, (1 - \alpha) q_1^2)$, $q_1^1 \in F_K, q_1^2 \in F_L$, $0 < \alpha < 1$ (due to the form of facets of $K \oplus_1 L$). We say that the momentum $p_i$ has the \emph{first type} if it belongs to a facet of the form $F_{K^{\circ}} \times L^{\circ}$ and has the \emph{second type} if it belongs to a facet of the form $K^{\circ} \times F_{L^{\circ}}$.

Note that the momentum of a certain type leads to the coordinate change of the same type.

Now we do with the polygonal lines $q_1^1 \rightarrow q_2^1 \rightarrow \ldots \rightarrow q_m^1 \rightarrow q_1^1$ and $p_1^1 \rightarrow p_2^1 \rightarrow \ldots \rightarrow p_m^1 \rightarrow p_1^1$ the following operations. If in the coordinate polygonal line there are neighboring vertices that coincide, let's contract this pair to the single vertex removing the corresponding momentum. Repeat this unless the coordinate polygonal line has no neighboring coincidences. (We continue to use the notation $q_1^1 \rightarrow \ldots \rightarrow q_1^1$ and $p_1^1 \rightarrow \ldots \rightarrow p_1^1$ for thus obtained polygonal lines.)

\begin{figure}
\centering
\includegraphics[width=6in]{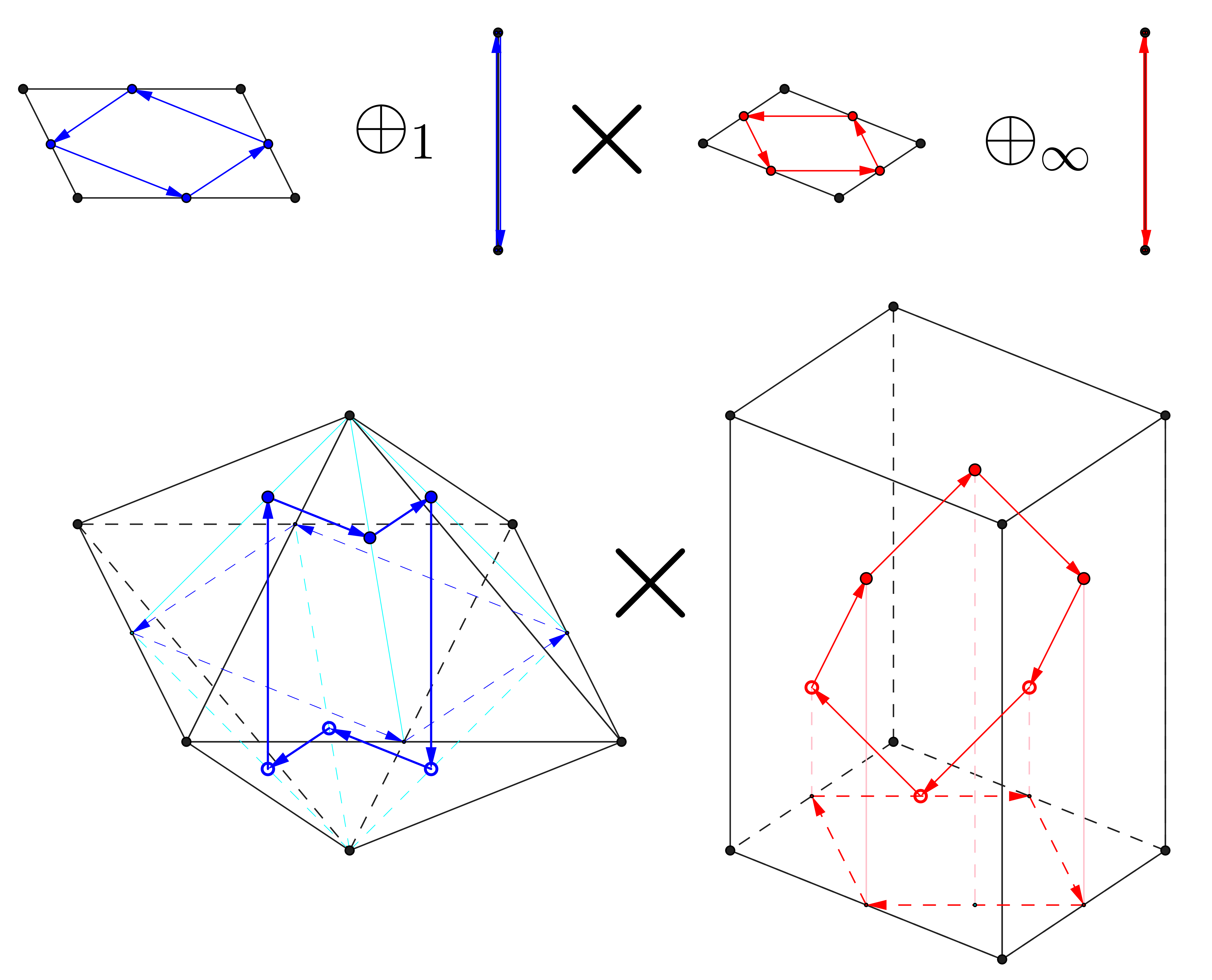}
\caption{Decomposition of a billiard trajectory in Hanner polytope}
\label{picture:hanner}
\end{figure}

We claim that after such a transformation we obtain a simple classical billiard trajectory in the body $\alpha K$ with geometry specified by $K^{\circ}$. As was shown before, $\|q_i^1\|_{K^{\circ}} = \alpha$, so $q_i^1 \in \partial(\alpha K)$. Moreover, $q_i^1$ lie in the relative interior of facets since the same holds for $q_i \in \partial (K \times L)$. The momenta $p_i^1$, that was not removed in the process, are obtained from the momenta $p_i$ of the first type; hence $p_i^1$ also lie on the boundary of $K^{\circ}$ in the relative interior of facets. Also it follows from all said above that the difference $q_{i}^1 - q_{i-1}^1$ is proportional to the normal $n_{K^{\circ}}(p_i)$, and the difference $p_{i+1} - p_i$ is proportional to the normal $n_{K}(q_i)$. It proves that the constructed polygonal line is indeed a classical billiard trajectory in the body $\alpha K$ with geometry specified by $K^{\circ}$. We construct the polygonal lines $q_1^2 \rightarrow \ldots \rightarrow q_1^2$ and $p_1^2 \rightarrow \ldots \rightarrow p_1^2$ in the similar manner. It remains to check its simplicity.

Let us examine carefully the change in the momenta. The geometry of $K^{\circ} \oplus_{\infty} L^{\circ}$ implies the following: If we consider the piece of the trajectory from the point $p_1$ to a certain point $\hat{p}$ and also consider its projections on the first $k$ coordinates and on the last $l$ coordinates then the length of this piece (measured with the norm with unit body $K^{\circ} \oplus_{\infty} L^{\circ}$) is equal to each of the projections' lengths (measured with the norm unit bodies $K^{\circ}$ and $L^{\circ}$). Apply this observation to the entire momentum polygonal line in $K^{\circ} \oplus_{\infty} L^{\circ}$ and note that $p_1^1 \rightarrow \ldots \rightarrow p_1^1$ is its projection on the first $k$ coordinates. Indeed, the removed momenta of the second type would change nothing in the projections: If in the chain $p_{i-1} \rightarrow p_{i} \rightarrow p_{i+1}$ the middle momentum is of the second type it means that the projection $p_{i}^1$ lies inside the segment $[p_{i-1}^1, p_{i+1}^1]$. Similarly, $p_1^2 \rightarrow \ldots \rightarrow p_1^2$ is the projection of the entire momentum polygonal line on the last $l$ coordinates.

So, assume that the polygonal line $q_1^1 \rightarrow \ldots \rightarrow q_1^1$ bypasses the same route $t$ times, then induction hypothesis implies that its length equals $4t\alpha$ (if the norm unit body is $K \oplus_1 L$). Then symmetry~\ref{equation:symmetry} yields that the length of $p_1^1 \rightarrow \ldots \rightarrow p_1^1$ also equals $4t\alpha$ (if the norm unit body is $K^{\circ} \oplus_{\infty} L^{\circ}$); hence the source momentum polygonal line $p_1 \rightarrow \ldots \rightarrow p_1$ has the length $4t\alpha$ and other its projection $p_1^2 \rightarrow \ldots \rightarrow p_1^2$ has the length $4t\alpha$ too. Therefore, the both projection were passed $t$ times and the source polygonal line was passed $t$ times also. The source polygonal line was chosen simple, hence $t=1$. That's why the length $q_1^1 \rightarrow \ldots \rightarrow q_1^1$ equals $4\alpha$.

Similarly, the length of $q_1^2 \rightarrow q_2^2 \rightarrow \ldots \rightarrow q_m^2 \rightarrow q_1^2$ equals $4(1-\alpha)$. Therefore, the length of $q_1 \rightarrow q_2 \rightarrow \ldots \rightarrow q_m \rightarrow q_1$ equals 4, as required.

Further, by induction hypothesis there are $2k$ momenta of the first type and $2l$ momenta of the second type in the polygonal line $p_1 \rightarrow p_2 \rightarrow \ldots \rightarrow p_1$. Therefore, the period of the simple billiard trajectory in the configuration $K \oplus_1 L \times K^{\circ} \oplus_{\infty} L^{\circ}$ equals exactly $2(k+l)$.

At last, to show central symmetry we examine momenta behavior again. As was shown before, the length $q_1 \rightarrow \ldots \rightarrow q_1$ equals 4, and the length $p_1 \rightarrow \ldots \rightarrow p_1$ also equals 4. Recall that the length of the piece of momentum trajectory from $p_1$ to some point $\hat{p}$ is equal to the lengths of the both projections on the first $k$ coordinates and on the last $l$ coordinates. One can set $\hat{p} = p_{k+l+1}$ and see that this observation together with induction hypothesis implies $p_1 = -p_{k+l+1}$. This reasoning can be applied starting from any point $p_i$ so central symmetry of the momentum trajectory is proven. Also, the arguments above imply that the momenta $p_i$ and $p_{i+n}$ have the same type so in the procedure of the ``thinning'' of $q_1^1 \rightarrow \ldots \rightarrow q_1^1$ the contracted links can be divided into pairs (on the distance of $n$ links between each other). The similar situation is with the ``thinning'' of $q_1^2 \rightarrow \ldots \rightarrow q_1^2$. This, together with the induction hypothesis, proves the central symmetry of the coordinate trajectory.
\end{proof}

In the argument above the trajectory in the configuration $K \oplus_1 L \times K^{\circ} \oplus_{\infty} L^{\circ}$ was decomposed in some way into two trajectories in the configurations $K \times K^{\circ}$ and $L \times L^{\circ}$. The following proposition reverses this decomposition showing that in some sense there are ``many'' trajectories in Hanner polytope.

\begin{theorem}
\label{theorem:hannervariety}
Let $K \subset \mathbb{R}^k$ and $L \subset \mathbb{R}^l$ be Hanner polytopes, and let $Q^1 = (q_1^1, \ldots, q_{2k}^1)$ with momenta $p_1^1 \rightarrow \ldots \rightarrow p_1^1$ and $Q^2 = (q_1^2, \ldots, q_{2l}^2)$ with momenta $p_1^2 \rightarrow \ldots \rightarrow p_1^2$ be simple classical billiard trajectories in the pairs $K \times K^{\circ}$ and $L \times L^{\circ}$. We consider the Hanner polytope $H = K \oplus_1 L$ (the first $k$ coordinates correspond to the summand $K$ and the last $l$ coordinates correspond to the summand $L$) with its polar $H^{\circ} = K^{\circ} \oplus_{\infty} L^{\circ}$. Let an initial coordinate $q_1 = (\alpha q_1^1, (1-\alpha) q_1^2)$ and an initial momentum $p_1 = (\beta p_1^1 + (1-\beta)p_2^1, p_1^2)$ be specified.

Then for any $\alpha \in (0,1)$ and for almost any $\beta \in (0,1)$ an extension of this initial position according to billiard rules in $H \times H^{\circ}$ gives a simple classical billiard trajectory satisfying the following properties:

\begin{enumerate}
\item The projection of its coordinate polygonal line on the first $k$ coordinates is congruent to $\alpha Q^1$, and the projection on the last $l$ coordinates is congruent to $(1-\alpha) Q^2$;
\item The projection of its momentum polygonal line on the first $k$ coordinates coincides with the momentum polygonal line of the trajectory $Q^1$, and the same on the last $l$ coordinates and the momentum polygonal line of the trajectory $Q^2$.
\end{enumerate}
\end{theorem}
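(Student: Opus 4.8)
\section*{Proof proposal}

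The plan is to run the billiard in $H \times H^\circ$ forward from the prescribed initial data $(q_1,p_1)$ and to show that this dynamics is exactly the \emph{inverse} of the decomposition carried out in the proof of Theorem~\ref{theorem:hanner}: the combined trajectory is the interleaving of the reflections of $Q^1$ and $Q^2$, with the interleaving pattern dictated by the single phase parameter $\beta$. I use the facet descriptions of $H = K \oplus_1 L$ and $H^\circ = K^\circ \oplus_\infty L^\circ$ recalled before the theorem. Recall that the reflection rule~\ref{equation:reflection} gives at every reflection $p_{i+1} - p_i = -\lambda_i\, n_H(q_i) = -\lambda_i\,(n_K(q_i^1), n_L(q_i^2))$ with $\lambda_i > 0$, so that \emph{both} momentum components jump at each step; by contrast the outgoing momentum $p_{i+1}$ lies on one facet of $H^\circ$, and the facet type determines which \emph{coordinate} component moves. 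I call a reflection a $K$-move if the outgoing momentum lies on a facet $F_{K^\circ} \times L^\circ$ (so $q^1$ moves and $q^2$ is frozen) and an $L$-move otherwise.

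The core dynamical claim is a local description of the motion. Along a maximal run of consecutive $L$-moves the point $q^1$ is frozen at some $q_j^1$, so $n_K(q_j^1)$ is constant and the $K^\circ$-projection of the momentum advances through collinear sub-segments, all parallel to $-n_K(q_j^1)$; since the reflection law of $Q^1$ reads $p_{j+1}^1 - p_j^1 = -\lambda\, n_K(q_j^1)$, this drift runs precisely along the single edge $[p_j^1,p_{j+1}^1]$ of the momentum polygon of $Q^1$. The run terminates exactly when the $K^\circ$-projection reaches the next facet of $K^\circ$, i.e.\ the vertex $p_{j+1}^1$; the ensuing $K$-move advances $q^1$ from $\alpha q_j^1$ to $\alpha q_{j+1}^1$ in the direction $n_{K^\circ}(p_{j+1}^1)$, which is exactly the coordinate edge of $Q^1$ governed by $p_{j+1}^1$ (rescaled by $\alpha$, which enters no reflection direction). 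The symmetric statement holds with $K$ and $L$ exchanged. Removing the frozen repetitions from the coordinate polygon and the interior pass-through vertices from the momentum polygon --- precisely the contractions of the proof of Theorem~\ref{theorem:hanner} --- then recovers $\alpha Q^1$ and $(1-\alpha)Q^2$ on the coordinate side and the momentum polygons of $Q^1, Q^2$ on the momentum side, which are properties (1) and (2).

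Closure and $2n$-periodicity follow from an arc-length synchronization. Since every normal $n_K(q^1)$ lies on $\partial K^\circ$ and every $n_L(q^2)$ on $\partial L^\circ$, one has $\|p_{i+1}^1 - p_i^1\|_K = \lambda_i = \|p_{i+1}^2 - p_i^2\|_L$ at \emph{every} reflection, so the two momentum projections advance by equal amounts at each step and are perfectly synchronized in arc length (this is the $\ell_\infty$ observation of the proof of Theorem~\ref{theorem:hanner}). By that theorem applied to $K \times K^\circ$ and $L \times L^\circ$ together with the symmetry~\ref{equation:symmetry}, the momentum polygons of $Q^1$ and $Q^2$ both have total length $4$; hence the two projections complete one loop simultaneously, the trajectory closes after exactly $2k$ $K$-moves and $2l$ $L$-moves, is $2n$-periodic, and traverses each of $Q^1, Q^2$ once. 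Simplicity of the reconstruction follows from that of $Q^1, Q^2$, since a repetition of the combined trajectory would force a repetition of one of its projections.

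The remaining point --- and the one I expect to be the main obstacle --- is the clause ``for almost any $\beta$''. The reconstructed path is a \emph{classical} trajectory only if every momentum vertex lies in the relative interior of a single facet of $H^\circ$; it fails to be classical precisely when a vertex is simultaneously $K^\circ$-active and $L^\circ$-active, i.e.\ lands on a face $F_{K^\circ} \times F_{L^\circ}$ of codimension at least two. By the synchronization above, the cumulative arc-lengths at which the $K^\circ$-projection meets facets of $K^\circ$ are fixed partial sums determined by $Q^1$, those for the $L^\circ$-projection are determined by $Q^2$, and the offset between the two families is a monotone (affine) function of $\beta$, since the initial projection $\beta p_1^1 + (1-\beta)p_2^1$ sweeps the edge $[p_1^1,p_2^1]$ as $\beta$ runs over $(0,1)$. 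A simultaneous hit is therefore a coincidence of two offset arithmetic patterns, which occurs for only finitely many $\beta$; excluding this finite set yields the claim, while $\alpha$ stays free as it never enters a reflection direction. The delicate bookkeeping is to verify that away from these resonances the drift of the passive projection neither overshoots its target facet nor stalls, so that the interleaving is well defined and the local description of the second paragraph holds verbatim at every step.
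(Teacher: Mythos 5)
Your proposal is correct and follows essentially the same route as the paper: run the billiard forward in $H\times H^{\circ}$, observe that the facet type of each outgoing momentum advances exactly one coordinate projection while the $\ell_\infty$ structure of $H^{\circ}$ keeps the two momentum projections synchronized in arc length, and exclude the finitely many $\beta$ for which both projections reach nodes simultaneously (the paper's ``forbidden $\beta$''). Your treatment of closure and of the resonance set is in fact slightly more explicit than the paper's, which settles for exhibiting one step of the dynamics and ``continuing in this fashion.''
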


\begin{proof}

A potential trajectory in $H \times H^{\circ}$, if exists, generates two trajectories in $K \times K^{\circ}$ and in $L \times L^{\circ}$ as we have seen before. Such a trajectory satisfies the property: The piece of momentum polygonal line from $p_1$ to $\hat{p}$ has the length equal to the lengths of its projections on the first $k$ and the last $l$ coordinates. Thus, in order to have a classical billiard trajectory, we must require the following from the initial position: There must not exist such a time moment, when the both momentum trajectories projection are in the nodes. More formally: we forbid all such $\beta$ for whose there exists $0<l<4$ such that pieces of both momentum polygonal line projections of the length $l$, starting at the points $\beta p_1^1 + (1-\beta)p_2^1$ and $p_1^2$, finish at the nodes. In more detail: for each node $p_i^2$ one can consider $l$ equal to the length of the piece $p_1^2 \rightarrow \ldots \rightarrow p_i^2$, and for every such $l$ there exist a finite number of forbidden $\beta$'s (corresponding to the coincidences between $p_i^2$ arrival time and time of arrival to certain node of the first polygonal line). Therefore there will be forbidden only a finite number of $\beta$'s, and we claim that any other $\beta$ is appropriate.

Let us demonstrate how it can be checked. Fix arbitrary $\alpha$ and arbitrary unforbidden $\beta$. The initial position $(q_1, p_1)$ allows us to uniquely determine the momentum $p_2$: We must choose such $\lambda > 0$ that $p_2 = p_1 - \lambda n_{K \oplus_1 L}(q_1)$ belongs to $\partial H^{\circ}$.

From the geometric point of view the expression $p_1 - \lambda n_{K \oplus_1 L}(q_1) = (\beta p_1^1 + (1-\beta)p_2^1 - \lambda n_{K}(q_1^1), p_1^2 - \lambda n_{L}(q_1^2))$ for small $\lambda$ means the shift towards $p_2^1$ in the first projection and the shift towards $p_2^2$ in the second projection. Now we grow $\lambda$ until we first reach such $\lambda$, for which either $\beta p_1^1 + (1-\beta)p_2^1 - \lambda n_{K}(q_1^1) \in \partial K^{\circ}$, or $p_1^2 - \lambda n_{L}(q_1^2) \in \partial L^{\circ}$. Both these events cannot occur simultaneously (since $\beta$ is chosen unforbidden).

If $\beta p_1^1 + (1-\beta)p_2^1 - \lambda n_{K}(q_1^1) \in \partial K^{\circ}$ is the first to occur then $p_2 = (p_2^1, \beta' p_1^2 + (1-\beta') p_2^2)$, where $\beta' \in (0,1)$ is chosen such that $p_1^2 - \lambda n_{L}(q_1^2) = \beta' p_1^2 + (1-\beta') p_2^2$. In terms of the previous proof $p_2$ has the second type.

If $p_1^2 - \lambda n_{L}(q_1^2) \in \partial L^{\circ}$ is the first to occur then $p_2 = (\beta' p_1^1 + (1-\beta') p_2^1, p_2^2)$, where $\beta' \in (0, \beta)$ is chosen such that $\beta p_1^1 + (1-\beta)p_2^1 - \lambda n_{K}(q_1^1) = \beta' p_1^1 + (1-\beta') p_2^1$. In terms of the previous proof $p_2$ has the first type.

In both cases, the obtained point $p_2$ indeed lies in the relative interior of the facet of $H \times H^{\circ}$.

The following coordinate change from $q_1$ to $q_2$ can be determined even easier: We know that for some  $\mu > 0$ it must hold
$q_2 - q_1 = (\mu n_{K^{\circ}}(p_2^1), 0_l)$ (if $p_2$ has the first type) or
$q_2 - q_1 = (0_k, \mu n_{L^{\circ}}(p_2^2))$ (if $p_2$ has the second type); this gives
$q_2 = (\alpha q_2^1, (1-\alpha) q_1^2)$ (if $p_2$ has the first type) or
$q_2 = (\alpha q_1^1, (1-\alpha) q_2^2)$ (if $p_2$ has the second type).

Continuing in this fashion we obtain a simple classical billiard trajectory in $H \times H^{\circ}$, satisfying the required properties.
\end{proof}

\begin{corollary}
Let $H$ be a Hanner polytope.
Among all specifications of an initial coordinate $q_1 \in \partial H$ and an initial momentum $p_1 \in \partial H^{\circ}$, satisfying the property that the normal $-n_{H}(q_1)$ put from $p_1$ looks inside $H^{\circ}$, almost all generate a closed simple classical billiard trajectory of length $4$ in the configuration $H \times H^{\circ}$.
\end{corollary}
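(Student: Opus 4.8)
The plan is to combine Theorems~\ref{theorem:hanner} and~\ref{theorem:hannervariety} in an induction on $n=\dim H$, using the symmetry property~\ref{equation:symmetry} to reduce to the case $H=K\oplus_1 L$, $H^{\circ}=K^{\circ}\oplus_{\infty}L^{\circ}$. The base case $n=1$ is the square $[-1,1]\times[-1,1]$, where every admissible initial position generates the back-and-forth trajectory of length $4$, so the exceptional set is empty. The inductive hypothesis is precisely the statement of the corollary for the lower-dimensional Hanner polytopes $K$ and $L$: the initial positions failing to generate a closed simple classical trajectory form a set of measure zero in the respective families of admissible starts.

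For the inductive step I would fix a facet of $H$ and a facet of $H^{\circ}$ and work on the corresponding chart of admissible initial positions $(q_1,p_1)$; since there are finitely many charts it suffices to treat each one. On such a chart $q_1=(\alpha q_1^{(1)},(1-\alpha)q_1^{(2)})$ with $q_1^{(1)}\in\rint F_K$, $q_1^{(2)}\in\rint F_L$, $\alpha\in(0,1)$, while $p_1$ lies in the relative interior of a facet of $H^{\circ}$ of the form $K^{\circ}\times F_{L^{\circ}}$ or $F_{K^{\circ}}\times L^{\circ}$; say it is $K^{\circ}\times F_{L^{\circ}}$, so that $p_1^{(2)}\in\rint F_{L^{\circ}}$ is a genuine momentum node while $p_1^{(1)}\in\inte K^{\circ}$. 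The pair $(q_1^{(2)},p_1^{(2)})$ is then an admissible initial position for $L\times L^{\circ}$, and the required normal condition is inherited from the one for $(q_1,p_1)$ under projection; by the inductive hypothesis almost all choices extend to a closed simple classical trajectory $Q^2$. For the $K$-component, the line through $p_1^{(1)}$ in the direction $n_K(q_1^{(1)})$ meets $\partial K^{\circ}$ in two points $p_1^1,p_2^1$, the pair $(q_1^{(1)},p_1^1)$ is an admissible initial position for $K\times K^{\circ}$, and by the inductive hypothesis almost all choices extend to a closed simple classical trajectory $Q^1$, the position of $p_1^{(1)}$ on the chord $[p_1^1,p_2^1]$ recording a parameter $\beta\in(0,1)$.

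Having recovered the data $(Q^1,Q^2,\alpha,\beta)$ from the initial position, I would invoke Theorem~\ref{theorem:hannervariety}: for every $\alpha\in(0,1)$ and all but finitely many $\beta$ this data extends to a simple classical billiard trajectory in $H\times H^{\circ}$ whose projections are congruent to $\alpha Q^1$ and $(1-\alpha)Q^2$. By Theorem~\ref{theorem:hanner} such a trajectory is automatically $2n$-periodic, hence closed, and has length $4$, which is exactly the desired conclusion. Thus on the chart the initial positions that fail are contained in the union of three sets: those for which the projected data $(q_1^{(2)},p_1^{(2)})$ or $(q_1^{(1)},p_1^1)$ lands in the inductive exceptional set for $L$ or for $K$, and those for which the recovered $\beta$ is one of the finitely many forbidden values of Theorem~\ref{theorem:hannervariety}.

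The main obstacle I expect is the measure-theoretic bookkeeping needed to show this union is genuinely null in the $2(n-1)$-dimensional chart. The natural tool is Fubini's theorem applied to the fibration of the chart by the gluing parameters, together with a dimension count: the data $(Q^1,Q^2,\alpha,\beta)$ depend on $1+(2k-1)+2(l-1)=2(n-1)$ parameters, matching the dimension of the chart, so the recovery map is generically a local diffeomorphism and carries null sets to null sets. One must check that the inductive exceptional sets for $K$ and $L$, which are null in the $2(k-1)$- and $2(l-1)$-dimensional families, pull back to null sets here, that the finitely-many forbidden $\beta$ sweep out only a null slice in each fibre, and that the boundary values $\alpha,\beta\in\{0,1\}$ together with starts on lower-dimensional faces of $H$ or $H^{\circ}$ are handled separately (they already form a lower-dimensional, hence null, set). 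Once this nested null-set assembly is in place, almost every initial position on every chart—and therefore almost every admissible initial position—generates a closed simple classical billiard trajectory of length $4$.
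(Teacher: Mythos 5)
Your proposal is correct and follows essentially the same route as the paper: induct on dimension, recover $(\alpha,\beta)$ and the two lower-dimensional initial positions from $(q_1,p_1)$, apply the inductive hypothesis to get the projected trajectories, and then invoke Theorem~\ref{theorem:hannervariety}. The paper's own proof is a brief sketch of exactly this reduction; your version merely supplies the measure-theoretic bookkeeping (charts, dimension count, Fubini) that the paper leaves implicit.
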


\begin{proof}
It is easy to see that given an initial position $(q_1,p_1)$ (if these points lie in the relative interiors of the facets of $H$ and $H^{\circ}$) one can uniquely determine $\alpha, \beta \in (0,1)$ from the previous proof and also determine the initial positions in two Hanner configurations of less dimensions. In these two problems of less dimensions induction hypothesis gives us two trajectories, serving as the projections for the trajectory we search for. After that we are in the setting of theorem~\ref{theorem:hannervariety}.
\end{proof}

\begin{corollary}
Let $H$ be a Hanner polytope.
In an arbitrarily small neighborhood of any point $(q,p) \in \partial (H \times H^{\circ})$ there always exists a billiard (in configuration $H \times H^{\circ}$) trajectory of the minimal length.
\end{corollary}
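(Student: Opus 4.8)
The plan is to read off the phase curve of a classical trajectory as a closed curve on $\partial(H\times H^\circ)$ and to show that such curves, coming from genuine minimal trajectories, sweep a dense set; the engine will be the preceding corollary, which says that almost every admissible pair $(q_1,p_1)\in\partial H\times\partial H^\circ$ is a node of an honest length-$4$ trajectory (and $4=\xi_{H^\circ}(H)$ is the minimal length by Theorem~\ref{theorem:hanner}). First I would record the two kinds of arcs of the phase curve: \emph{free-flight} arcs, on which $p=p_i\in\partial H^\circ$ is fixed and $q$ traverses a chord of $H$ in the velocity direction $v(p_i)=n_{H^\circ}(p_i)$, and \emph{reflection} arcs, on which $q=q_i\in\partial H$ is fixed and $p$ traverses a chord of $H^\circ$ in the direction $n_H(q_i)$. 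Since $\partial(H\times H^\circ)=(\partial H\times H^\circ)\cup(H\times\partial H^\circ)$, a target $(q_*,p_*)$ is — away from the lower-dimensional corner set $\partial H\times\partial H^\circ$ — either of free-flight type ($q_*\in\inte H$ and $p_*$ in the relative interior of a facet of $H^\circ$) or of reflection type. The symmetry $\xi_{H^\circ}(H)=\xi_H(H^\circ)$ from~\eqref{equation:symmetry} interchanges the roles of $q$ and $p$ and turns the problem into the again Hanner-polar configuration $H^\circ\times H$, under which the reflection case for $(H,H^\circ)$ becomes the free-flight case for $(H^\circ,H)$; so it suffices to treat free-flight targets.

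For a free-flight target I would argue as follows. The facet of $H^\circ$ containing $p_*$ has a constant outer normal, so $v_*=n_{H^\circ}(p_*)$ is locally constant; draw the chord $\ell$ of $H$ through $q_*$ in direction $v_*$ and let $q_1\in\partial H$ be its \emph{exit} endpoint. A short sign check shows that at the exit point the admissibility hypothesis of the preceding corollary — that $-n_H(q_1)$ issued from $p_*$ points into $H^\circ$ — is satisfied, because the arriving velocity points out of $H$, i.e.\ $\langle v_*,n_H(q_1)\rangle>0$. As the exceptional set of that corollary has measure zero inside the $(2n-2)$-dimensional set of admissible pairs, I can perturb within the two facets to an admissible, non-exceptional pair $(q_1',p_1')$ with $q_1'$ near $q_1$ and $p_1'$ kept in the \emph{same} facet as $p_*$, so that $v(p_1')=v_*$ exactly. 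The resulting minimal trajectory has $q_1'$ as a node, and the free-flight arc arriving at $q_1'$ is the chord through $q_1'$ in direction $v_*$; as $q_1'\to q_1$ this chord converges to $\ell$, on whose interior $q_*$ lies. Hence for $q_1'$ close enough the arc passes within $\epsilon$ of $q_*$ while its momentum $p_1'$ stays within $\epsilon$ of $p_*$, so the phase curve meets the $\epsilon$-ball around $(q_*,p_*)$.

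I expect the main obstacle to be the bookkeeping at the interfaces between the strata and the sign verification in the admissibility check: one must confirm that $q_1$ is the exit and not the entry endpoint of $\ell$, that the arriving free-flight arc is the whole chord so that $q_*$ genuinely lies on it, and that the perturbation avoiding the measure-zero exceptional set can be made while keeping $p_1'$ inside the same facet (so that the arc direction does not wobble and only $q_1'$ moves, giving parallel chords). Finally, the corner stratum $\partial H\times\partial H^\circ$ together with the lower-dimensional faces of $H$ and $H^\circ$ are handled by a limiting argument: relative interiors of facets are dense, and the arcs depend continuously on the initial data, so density on the two top strata passes to their closures, covering all of $\partial(H\times H^\circ)$.
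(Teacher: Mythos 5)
Your proposal is correct and follows essentially the same route as the paper: reduce by symmetry to the case $p\in\partial H^\circ$ with $q$ perturbed into $\inte H$ and $p$ into the relative interior of a facet, shoot the ray from $q$ in the direction $n_{H^\circ}(p)$ to find the bounce point $q'$ on $\partial H$, and invoke the preceding corollary to extend a nearby non-exceptional initial position $(q',p)$ to a minimal trajectory whose incoming free-flight chord passes near $(q,p)$. Your version merely spells out the admissibility sign check and the stratum bookkeeping that the paper leaves implicit.
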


\begin{proof}
We can suppose $p \in \partial H^{\circ}$ without loss of generality. Slightly stepping aside we can make $q$ to lie in the interior of $H$ and make $p$ to lie in the relative interior of a facet of $H^{\circ}$. The ray emanating from $q$ in the direction $n_{H^{\circ}}(p)$ intersects $\partial H$ in a certain point $q'$. According to the previous corollary, $(q',p)$ almost surely can be extended up to a simple classical trajectory; hence in its small neighborhood we can find a good initial position extendible to the required trajectory, passing close to $(q,p)$.
\end{proof}

\section{Billiards in bodies of constant width}
\label{constwid}

In section we deal with the Euclidean case $T=B$ and consider a different problem, related to the previous results.

\begin{theorem}
\label{theorem:constwid}
The minimal billiard trajectory in a body of constant width in the plane is 2-periodic.
\end{theorem}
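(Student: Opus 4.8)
The plan is to combine the Bezdek-type description of $\xi_B$ (Theorem~\ref{theorem:bezdeks}) with the rigidity of constant width. Write $w=w_B(K)$ for the (constant) width. In the plane Theorem~\ref{theorem:bezdeks} guarantees that the minimum defining $\xi_B(K)$ is attained on a closed polygonal line with $m\le 3$ vertices, so it suffices to compare the $2$-periodic and the $3$-periodic competitors. For $m=2$ a non-fitting pair $(q_1,q_2)$ has length $\ell_B=2|q_1-q_2|$, and since every direction has width $w$, the shortest segment that cannot be translated into $\inte K$ has length exactly $w$; hence the best $2$-periodic value is $2w$, realized by bouncing along a double normal. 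Thus the theorem reduces to the single inequality: \emph{every closed $3$-gon $ABC$ that cannot be translated into $\inte K$ has perimeter at least $2w$.}

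First I would set up the primal picture, placing $A,B,C\in\partial K$ and using the reflection law, so that the inward unit normal $\nu_A$ at $A$ bisects $\angle BAC$, and similarly at $B,C$; constant width then furnishes for each vertex an antipodal boundary point $\bar A=A+w\nu_A$ with $|A\bar A|=w$. It is important to record why the naive estimates fail here: bounding $K$ by the circumscribing triangle of the three supporting lines (as in the proof of Theorem~\ref{theorem:rogshepeucl}) and projecting onto a single direction only give perimeter $\ge\sqrt3\,w$, with equality for the equilateral triangle, which has width $w$ but is \emph{not} of constant width. So the argument must use the full constant-width hypothesis, not merely the value of the width.

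The cleanest route I would then pursue is the dual picture provided by the symmetry $\xi_B(K)=\xi_K(B)$ from~\eqref{equation:symmetry}: applying Theorem~\ref{theorem:bezdeks} to the unit disk $B$ with geometry $K$ turns the problem into minimizing $\ell_K$ over polygons inscribed in $B$ whose smallest enclosing circle is $\partial B$. Since $\|v\|_K=h_K(v)$ (the support function of $K$) and $v_1+v_2+v_3=0$ for the sides $v_i=q_{i+1}-q_i$ of an inscribed triangle, constant width reads $h_K(v)+h_K(-v)=w|v|$, and the $\ell_K$-length splits as
$$
\sum_i h_K(v_i)=\tfrac{w}{2}\,L_{\mathrm{eucl}}+G,\qquad G=\tfrac12\sum_i\bigl(h_K(v_i)-h_K(-v_i)\bigr),
$$
where $L_{\mathrm{eucl}}$ is the ordinary perimeter. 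A short separate computation shows $L_{\mathrm{eucl}}\ge 4$ for every such triangle (the minimum being the degenerate diameter), so the whole reduction comes down to the estimate $|G|\le\tfrac{w}{2}(L_{\mathrm{eucl}}-4)$ for the odd, ``asymmetric'' part $G=\sum_i|v_i|\,g(\hat v_i)$, with $g(\hat v)=\tfrac12(h_K(\hat v)-h_K(-\hat v))$.

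The main obstacle is exactly this control of $G$. For a centrally symmetric constant-width body one has $g\equiv$ linear, hence $G\equiv0$ and the inequality is immediate; but in the plane the only such body is the disk, and for genuinely asymmetric examples (Reuleaux-type polytopes) $G\ne0$. Moreover the bound is tight: as the inscribed triangle degenerates onto a single double normal one has $L_{\mathrm{eucl}}\to4$, while the antipodal cancellation in $g$ forces $G\to0$, so the $3$-periodic length tends to the $2$-periodic value $2w$. A crude estimate $|G|\le\bigl(\min_m\sup_{\hat v}|g(\hat v)-\langle m,\hat v\rangle|\bigr)L_{\mathrm{eucl}}$ is therefore insufficient near degeneracy, and the required sharpness should instead come from the structural fact that the odd part of the support function of a constant-width body carries only harmonics of order $\ge 3$. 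Quantifying the resulting cancellation in $G$ along the three side-directions of an inscribed triangle is where the real work lies.
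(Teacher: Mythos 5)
Your reduction is correct as far as it goes: by Theorem~\ref{theorem:bezdeks} only $m\le 3$ matters, the $2$-periodic optimum is $2w$ along a double normal, and the dual reformulation via~\eqref{equation:symmetry} together with $h_K(v)+h_K(-v)=w|v|$ legitimately splits $\ell_K$ of an inscribed triangle as $\tfrac{w}{2}L_{\mathrm{eucl}}+G$ with $L_{\mathrm{eucl}}\ge 4$. But the proof is not complete: everything hinges on the estimate $G\ge-\tfrac{w}{2}\bigl(L_{\mathrm{eucl}}-4\bigr)$ (you even ask for the stronger two-sided bound $|G|\le\tfrac{w}{2}(L_{\mathrm{eucl}}-4)$), and you explicitly leave it unproven, remarking that ``quantifying the resulting cancellation in $G$ \dots is where the real work lies.'' That is precisely the content of the theorem; as you yourself observe, the inequality is tight in the degenerate limit, so no soft or crude bound on the odd part of $h_K$ will close it, and the appeal to ``harmonics of order $\ge 3$'' is a heuristic, not an argument. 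As submitted, the proposal is a (correct and not unreasonable) reformulation of the problem, not a proof.

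For contrast, the paper stays entirely in the primal picture and uses the constant-width hypothesis in a much more hands-on way. Assuming a $3$-periodic trajectory $ABC$ of length $\le 2w$, it lays off segments $AA_1$, $BB_1$, $CC_1$ of length $w$ along the interior bisectors (your $\nu_A,\nu_B,\nu_C$); constant width forces $A_1,B_1,C_1\in K$, since otherwise the width in the direction $AA_1$ would exceed $w$. A direct trigonometric computation, using that $A'$ (the excenter) satisfies $|AT_A|=p\le w$ where $p$ is the semiperimeter, then yields $|A_1B_1|^2\ge w^2+(w-c)^2+2c\,(w-\cos\tfrac{\alpha}{2})(w-\cos\tfrac{\beta}{2})>w^2$ (stated for $w=1$), and similarly for the other two pairs; this contradicts $\diam K=w$. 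If you want to salvage your dual route, you would need to prove your inequality on $G$ with the same degree of sharpness that this elementary computation achieves; until then the argument has a genuine gap at its central step.
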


The proof of~\cite[Theorem~1.2]{bb2009} also suits well for proving this statement.
We quote below another argument, whose essential part is proposed by A.~Zaslavsky.

\begin{proof}
By Theorem~\ref{theorem:bezdeks}, it remains to exclude the case of a 3-periodic trajectory. Suppose, in a body $K \subset \mathbb{R}^2$ of a constant width 1, there is a 3-periodic billiard trajectory of length not exceeding 2. Let $A, B, C$ be the vertices of the trajectory. We draw the segments $AA_1, BB_1, CC_1$ of length 1 along the bisectors of $\triangle ABC$. If we measure the width of $K$ along the direction $AA_1$ constructing both supporting lines orthogonal to $AA_1$ then we must obtain points $A$ and $A_1$ as the tangency points (otherwise the distance between them is more than 1). That's why $A_1 \in K$ and, similarly, $B_1 \in K, C_1 \in K$.

Put $\angle BAC = \alpha, \angle ABC = \beta, |BC| = a, |AC| = b, |AB| = c, p = \frac{a+b+c}{2} \le 1$.
Now we prove even stronger statement: All the distances $|A_1B_1|, |B_1C_1|, |C_1A_1|$ are greater than 1. For $|A_1B_1|$ (other cases are similar), we are going to establish the chain of inequalities:
$$
|A_1B_1|^2 \ge 1 + (1-c)^2 + 2c(1-\cos \frac{\alpha}{2})(1-\cos \frac{\beta}{2}) > 1.
$$

Let $J_A$ be the excenter of $\triangle ABC$ adjacent to $BC$ and let $T_A$ be the base of the perpendicular to the line $(AB)$ through $J_A$. It is elementary that $|AT_A| = p$.

\begin{figure}
\centering
\includegraphics[width=1.0\textwidth]{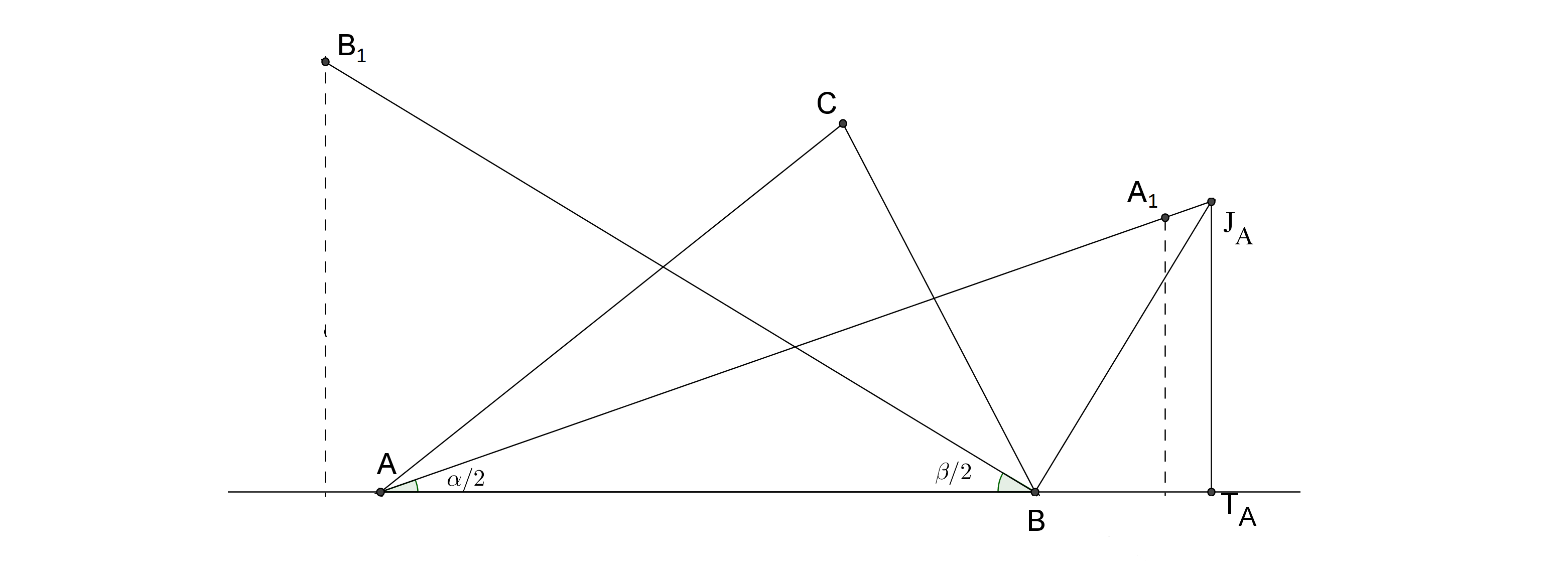}
\caption{Illustration to the proof of theorem~\ref{theorem:constwid}}
\label{picture:zaslavsky}
\end{figure}

Further, $|BT_A| = |J_AT_A| \tan \frac{\beta}{2} = |AT_A| \tan \frac{\alpha}{2} \tan \frac{\beta}{2}$, whence
$$
\frac c p = 1 - \frac{|BT_A|}{|AT_A|} = 1 - \tan \frac{\alpha}{2} \tan \frac{\beta}{2}.
$$
The projection of $A_1B_1$ on the line $(AB)$ is equal to $\cos \frac{\alpha}{2} + \cos \frac{\beta}{2} - c$ and the projection of $A_1B_1$ on the direction orthogonal to $(AB)$ is equal to $|\sin \frac{\alpha}{2} - \sin \frac{\beta}{2}|$; therefore
\begin{multline*}
|A_1B_1|^2 = (\cos \frac{\alpha}{2} + \cos \frac{\beta}{2} - c)^2 + (\sin \frac{\alpha}{2} - \sin \frac{\beta}{2})^2 = \\
= 2 + c^2 - 2c (\cos \frac{\alpha}{2} + \cos \frac{\beta}{2}) + 2 (\cos \frac{\alpha}{2} \cos \frac{\beta}{2} - \sin \frac{\alpha}{2} \sin \frac{\beta}{2}) = \\
= 1 + (1-c)^2 + 2c - 2c (\cos \frac{\alpha}{2} + \cos \frac{\beta}{2}) + 2\frac c p \cos \frac{\alpha}{2} \cos \frac{\beta}{2} \ge \\
\ge 1 + (1-c)^2 + 2c(1-\cos \frac{\alpha}{2})(1-\cos \frac{\beta}{2}),
\end{multline*}
since $p \le 1$.
\end{proof}

\end{document}